\def\N{{\mathbb N}}
\def\Z{{\mathbb Z}}
\def\C{{\mathbb C}}
\def\A{{\mathbb A}}
\def\cO{\mathcal{O}}
\def\cX{\mathcal{X}}
\def\cY{\mathcal{Y}}
\def\cZ{\mathcal{Z}}
\def\fa{\mathfrak{a}}
\def\fm{\mathfrak{m}}
\def\fn{\mathfrak{n}}
\def\fp{\mathfrak{p}}
\def\.{\cdot}
\let\circum\^
\def\^{\widehat}
\def\~{\widetilde}
\def\({\left(}
\def\){\right)}
\def\*{{}^*}
\newcommand{\cotimes}[1][]{\: \widehat{\otimes}_{#1} \:} 
\renewcommand{\and}{ \ \ \text{ and } \ \ }
\def\sm{\mathrm{sm}}
\DeclareMathOperator{\Spec} {Spec}
\DeclareMathOperator{\Spf} {Spf}
\DeclareMathOperator{\id} {id}
\DeclareMathOperator{\Hom} {Hom}
\DeclareMathOperator{\pr} {pr}
\DeclareMathOperator{\Der}{Der}
\def\isom{\simeq} 
\def\embdim{\mathrm{edim}}
\def\embcodim{\mathrm{ecodim}}
\theoremstyle{plain}
\newtheorem{introtheorem}{Theorem}
\crefname{introtheorem}{Theorem}{Theorems}
\newtheorem{theorem}{Theorem}[section]
\newtheorem*{theorem*}{Theorem}
\newtheorem{proposition}[theorem]{Proposition}
\newtheorem{lemma}[theorem]{Lemma}
\newtheorem{corollary}[theorem]{Corollary}
\theoremstyle{definition}
\newtheorem{definition}[theorem]{Definition}
\theoremstyle{remark}
\newtheorem{remark}[theorem]{Remark}
\newtheorem{example}[theorem]{Example}
\newtheorem{question}[theorem]{Question}
\numberwithin{figure}{section}
\numberwithin{equation}{section}
\begin{document}


\title{On the formal neighborhood of a degenerate arc}

\author{Christopher Chiu}
  \address[C.\ Chiu]{%
    Department of Mathematics\\
    KU Leuven\\
    Celestijnenlaan 200b - bus 2400\\
    3001 Leuven (Belgium)%
  }
  \email{christopherheng.chiu@kuleuven.be}
  
\author{Herwig Hauser}
  \address[H.\ Hauser]{%
    Fakult\"at f\"ur Mathematik\\
    Universit\"at Wien\\
    Oskar-Morgenstern-Platz 1\\
    A-1090 Wien (\"Osterreich)%
  }
  \email{herwig.hauser@univie.ac.at}

\subjclass[2020]{Primary 14E18, 14B05, 14B20.}
\keywords{Arc spaces, formal neighborhood.}

\thanks{The first author was supported by a postdoc fellowship (12AZ524N) of the Research Foundation - Flanders (FWO). The second author was partially supported by the Austrian Science Fund (FWF) through projects P31338 and P34765}
  
\begin{abstract}

 We prove a result describing the structure of the formal neighborhoods of certain arcs in the arc space of an algebraic variety which are completely contained in the singular locus. In particular, we provide a precise formulation of the intuitive statement that constant arcs centered in the singular locus are the most singular points of the arc space.

\end{abstract}

\maketitle


Let $X$ be a scheme of finite type over a field $k$. The arc space $X_\infty$ of $X$ is the $k$-scheme whose points parametrize formal arcs on $X$. In particular, for each field extension $K \supset k$, the $K$-points of $X_\infty$ correspond to morphisms
\[
    \Spec K[[t]] \to X.
\]
Since the work of Nash, the topological structure of $X_\infty$ has been known to encode deep properties about the singularities of $X$. More recently, the singularities of $X_\infty$ itself have been an object of study. One of the first main results in this direction is the theorem of Drinfeld, Grinberg and Kazhdan. It roughly says that, even though the arc space $X_\infty$ is infinite-dimensional, its singularities at arcs not contained in the non-smooth locus $X \setminus X_\sm$ are encoded in a finite-dimensional model. 

\begin{theorem*}[\protect{\cite{Dri}}]
    Let $X$ be a scheme of finite type over a field $k$. Let $\alpha \in X_\infty(k)$ be such that $\alpha \notin (X \setminus X_\sm)_\infty$. Then there exists a scheme $Y$ of finite type over $k$, a point $y \in Y(k)$ and an isomorphism of formal neighborhoods
    \[
        \^{(X_\infty)}_\alpha \isom \^Y_y \times \Spf k[[z_i \mid i\in I]].
    \]
\end{theorem*}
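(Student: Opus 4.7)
The plan is to follow Drinfeld's original strategy: embed $X$ locally in affine space, use the Jacobian of the defining equations to produce an implicit-function-type decomposition of the formal neighborhood, and extract a finite-dimensional ``essential part'' $Y$ whose size is controlled by the order of vanishing of a Jacobian minor along $\alpha$.

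First I would reduce to the affine case $X = V(f_1,\ldots,f_m) \subset \A^N_k$. The arc space is then the closed subscheme of $\A^\infty_k = \Spec k[x_i^{(j)} : 1 \le i \le N,\ j \ge 0]$ cut out by the Taylor coefficients of $f_\ell(x(t))$, where $x_i(t) = \sum_{j\ge 0} x_i^{(j)} t^j$. The arc $\alpha$ corresponds to a tuple $\alpha(t) \in k[[t]]^N$, and $\^{(X_\infty)}_\alpha$ is the completion of this coordinate ring at the resulting maximal ideal. The smoothness hypothesis $\alpha \notin (X\setminus X_\sm)_\infty$ translates to: $\alpha$ lies in $X_\sm$ at the generic point of $\Spec k[[t]]$. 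Equivalently, letting $r = \codim(X,\A^N)$, some $r\times r$ minor $\Delta$ of the Jacobian matrix $(\partial f_\ell/\partial x_i)$ satisfies $\Delta(\alpha(t))\ne 0$ in $k[[t]]$, and I set $e := \ord_t \Delta(\alpha(t)) < \infty$. After renumbering, assume $\Delta$ uses $f_1,\ldots,f_r$ and $x_1,\ldots,x_r$.

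The central step is an implicit-function / Weierstrass argument on the equations. A deformation of $\alpha$ is written $x(t) = \alpha(t) + y(t)$, and the arc equations become $J(\alpha(t))\cdot y(t) + Q(y(t)) = 0$ where $Q$ collects quadratic and higher terms in $y$. Multiplying by the adjugate of the chosen $r \times r$ block of $J(\alpha)$ turns the linear part into $\Delta(\alpha(t))\, y_i$ for $i \le r$. Since $\Delta(\alpha(t)) = t^e \cdot u(t)$ for some unit $u(t) \in k[[t]]^\times$, one can formally solve for $y_1,\ldots,y_r$ by iterated substitution, each round recovering higher Taylor coefficients $y_i^{(j)}$ as convergent power series in the remaining coefficients. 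The finitely many coefficients $y_i^{(j)}$ ($i \le r$, $j < j_0(e)$) that are not determined by the procedure, together with a finite truncation of the equations they satisfy, cut out the finite-type scheme $Y$ with distinguished point $y$; the free coefficients $y_i^{(j)}$ for $i > r$ and $j$ above another threshold supply the countable index set $I$ and the formal variables $z_i$.

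The main obstacle is making this iteration fully rigorous and identifying the resulting change of coordinates with an honest isomorphism of formal schemes. The cleanest route is functorial: for every Artinian local $k$-algebra $A$, show that an $A$-valued deformation of $\alpha$ corresponds bijectively and naturally to a pair consisting of an $A$-point of $\^Y_y$ and an $I$-tuple of elements of $\fm_A$. The key reason this works is that $e$ is \emph{finite}, which bounds the depth of the recursion and forces the ``essential'' data to live in finitely many Taylor coordinates; the noetherianity of the resulting truncated system of equations then yields that $Y$ is of finite type. The subtle point to watch is that $\^{(X_\infty)}_\alpha$ is the completion of a non-noetherian ring, so convergence of the substitutions has to be checked in the $\fm$-adic topology of the completion itself rather than in any ambient noetherian ring.
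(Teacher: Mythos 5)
This theorem is stated in the paper without proof: it is the Drinfeld--Grinberg--Kazhdan theorem, cited from \cite{Dri} and used only as motivation and background, so there is no ``paper's own proof'' to compare against. Your sketch does follow the strategy of Drinfeld's original argument (local embedding in $\A^N$, the finite order $e$ of vanishing of a Jacobian minor along $\alpha$, an adjugate/Weierstrass-style iteration that solves for all but finitely many Taylor coefficients, a finite-type $Y$ living in the low-order coefficients). You are also right that the delicate point is making the iteration an honest isomorphism of formal neighborhoods, checked functorially on test rings and with care about the non-Noetherian completion.

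One concrete gap you should address: after renumbering you work only with $f_1,\ldots,f_r$ and the adjugate of the corresponding $r\times r$ block, silently discarding $f_{r+1},\ldots,f_m$. This amounts to replacing $X$ by the complete intersection $X'=V(f_1,\ldots,f_r)\supset X$, and the isomorphism you build is a priori for $\^{(X'_\infty)}_\alpha$, not $\^{(X_\infty)}_\alpha$. Drinfeld deals with this explicitly via a separate reduction showing that, because $\alpha$ is nondegenerate and hence generically lands in the locus where $X=X'$, the closed immersion $\^{(X_\infty)}_\alpha\hookrightarrow\^{(X'_\infty)}_\alpha$ induced by $X\subset X'$ is compatible with the product decomposition, so that the extra equations $f_{r+1},\ldots,f_m$ only cut down the finite-dimensional factor $Y'$ to a closed subscheme $Y$, leaving the infinite product of formal disks untouched. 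Without this step the argument proves the theorem for complete intersections only. A second, smaller point: multiplying the system by the adjugate matrix is not an equivalence of ideals (it may enlarge the solution set), so one must also argue that no spurious deformations are introduced; Drinfeld's functor-of-points bookkeeping takes care of this, and your functorial formulation is the right framework to do so, but it needs to be said.
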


Here $\^X_x = \Spf \^{\cO_{X,x}}$ denotes the formal neighborhood of the scheme $X$ at $x\in X$. For further development see \cite{BNS16,BS17b,Dri20,HW21}.

In this article we are interested in the case where the main assumption of the Drinfeld--Grinberg--Kazhdan theorem is not fulfilled, that is, when the arc $\alpha$ is contained in $X \setminus X_\sm$. Such arcs are sometimes called \emph{degenerate}. In \cite{BS17} it is proven that the existence of a finite-dimensional model fails for the constant arc centered at $0$ on the variety $X = \Spec k[x,y]/(x^2 + y^2)$ over a non-algebraically closed field $k$. This failure was verified for all degenerate arcs on an arbitrary scheme of finite type over a perfect field in \cite{CdFD22}, showing that the assumptions in the Drinfeld--Grinberg--Kazhdan theorem are optimal. The approach in \cite{CdFD22} was to characterize nondegenerate arcs by means of the finiteness of embedding codimension, an invariant of singularities.

Our main result gives a description of the formal neighborhood of constant arcs in the case of characteristic $0$. We say that a formal $k$-scheme $\cX$ admits a smooth factor if $\cX \isom \cZ \times \Spf k[[z]]$, where the product is taken in the category of formal $k$-schemes.

\begin{introtheorem}
    \label{t:main-thm}
    Let $X$ be a scheme over a field $k$ of characteristic $0$. Let $x \in X(k)$ and for $n \in \N\cup \{\infty\}$ let $\alpha_x \in X_n$ be the constant $n$-jet resp.\ arc centered at $x$ (see \cref{s:jets-arcs}). Then $\^X_x$ admits a smooth factor if and only if $\^{(X_n)}_{\alpha_x}$ admits a smooth factor.
\end{introtheorem}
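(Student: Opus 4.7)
The plan is to treat the two directions separately. For the forward direction, suppose $\^X_x$ admits a smooth factor, so $\^X_x \isom \^Z_z \times \Spf k[[w]]$. Since the formation of $\^{(X_n)}_{\alpha_x}$ depends only on the formal scheme $\^X_x$ and is compatible with products, this decomposition lifts to $\^{(X_n)}_{\alpha_x} \isom \^{(Z_n)}_{\alpha_z} \times \^{((\A^1)_n)}_{0}$. The second factor is the formal polydisc $\Spf k[[w_0, \dots, w_n]]$ (with the obvious convention for countably many coordinates when $n = \infty$), which is smooth, giving the desired smooth factor.

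For the reverse direction, the first step is to invoke the characterization (valid in characteristic $0$) that a complete local $k$-algebra $\^R$ admits a smooth factor if and only if there exist $z \in \mathfrak{m}_{\^R}$ and a continuous $k$-derivation $D$ with $D(z) = 1$: the formal flow of $D$ then yields a splitting $\^R \isom \ker D \widehat{\otimes} k[[z]]$. Fixing a closed embedding $X \hookrightarrow \A^N$, this is equivalent to $\im(\mathrm{ev}_{\^R}) \ne 0$, where $\mathrm{ev}_{\^R} \colon \Der^{\mathrm{cts}}_k(\^R, \^R) \to T_{\alpha_x} X_n$ sends $D \mapsto (D(x_i^{(l)})(\alpha_x))_{i, l}$. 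Writing $V \subseteq T_x X$ for the image of the analogous map on $\^X_x$, the reverse direction reduces to the inclusion
\[
 \im(\mathrm{ev}_{\^R}) \subseteq \bigoplus_{l = 0}^{n} V \subseteq T_{\alpha_x} X_n.
\]

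The weight-$0$ part ($l = 0$) of this inclusion comes from the $\mathbb{G}_m$-action on $X_n$ rescaling the jet parameter $t$: this action fixes $\alpha_x$ and identifies $\^X_x$ with the weight-$0$ subring of $\^R$, so for any derivation $D$ on $\^R$, its weight-$0$ component $D_0$ restricts to a continuous derivation of $\^X_x$ whose evaluation at $x$ is exactly $(D(x_i^{(0)})(\alpha_x))_i$. For the higher-weight parts I would use the arc-parameter derivation $D_{\mathrm{time}}$ on $\^{(X_\infty)}_{\alpha_x}$ defined by $D_{\mathrm{time}}(x_i^{(j)}) = (j+1) x_i^{(j+1)}$, together with the formal-flow identity
\[
 \bigl(\exp(s \cdot \operatorname{ad}(D_{\mathrm{time}}))(D)\bigr)(x_i^{(0)}) \big|_{\alpha_x} = \sum_{l \geq 0} (-s)^l D(x_i^{(l)})(\alpha_x),
\]
which holds because $\exp(-s D_{\mathrm{time}})(x_i^{(0)}) = \sum_{l} (-s)^l x_i^{(l)}$ and $D_{\mathrm{time}}^m$ raises the $\mathbb{G}_m$-weight and so vanishes at $\alpha_x$ for $m \geq 1$. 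Matching coefficients shows $\operatorname{ad}(D_{\mathrm{time}})^l(D)(x_i^{(0)})(\alpha_x) = (-1)^l l! \cdot D(x_i^{(l)})(\alpha_x)$, and applying the weight-$0$ argument to the derivation $\operatorname{ad}(D_{\mathrm{time}})^l(D)$ places each $(D(x_i^{(l)})(\alpha_x))_i$ inside $V$ (after dividing by $l! \in k^\times$).

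The principal obstacle is the passage to finite $n$, since $D_{\mathrm{time}}$ does not descend to $\^{(X_n)}_{\alpha_x}$ ($D_{\mathrm{time}}(f^{(n)}) = (n+1) f^{(n+1)}$ is generally not in $J_n$). I would address this by lifting: given a smooth-factor pair $(z, D)$ on $\^{(X_n)}_{\alpha_x}$, one extends it to a smooth-factor pair on $\^{(X_\infty)}_{\alpha_x}$ by inductively choosing $\tilde{D}(x_i^{(l)})$ for $l > n$ so as to preserve the successive jet equations $f^{(l)}$; these are linear systems over $\^{(X_{l-1})}_{\alpha_x}$ whose compatible solvability is the technical heart of the proof. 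Once lifted to $n = \infty$, the commutator argument above concludes.
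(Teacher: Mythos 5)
Your forward direction coincides with the paper's: reduce via the formal-neighborhood functoriality lemma (\cref{l:isom-formal-nbhd}) to the jet space of a product $Y \times \A^1$, and the factor $(\A^1)_n$ supplies the smooth coordinates. For the reverse direction you take a genuinely different route. The paper restricts a regular derivation $d$ of $\^{\cO_{X_n,\alpha_x}}$ to the Hasse--Schmidt algebra $A_n$ and applies the adjunction
\[
\Der_k(A_n, M) \isom \Der_k(A, M \otimes_{A_n} B_n), \qquad B_n = A_n[t]/(t^{n+1});
\]
since $M \otimes_{A_n} B_n \isom M^{n+1}$, composing with the retraction $\^q \colon R_n \to R$ produces an element of $\Der_k(R,R^{n+1}) \isom \prod_{i}\Der_k(R)$ with a nonzero component, and that component is the regular derivation on $R = \^{\cO_{X,x}}$. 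You instead try to extract the same information via the $\mathbb{G}_m$-weight grading on $A_n$ and conjugation of $D$ by the formal flow of the time derivation $D_{\mathrm{time}}$. Conceptually your commutator identity is unpacking exactly the data that the Hasse--Schmidt adjunction makes manifest, but the paper's route is shorter, works at the level of the polynomial ring $A_n$ before any localization or completion, and handles $n \in \N$ and $n = \infty$ on completely equal footing.

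The real gap is the lifting step you defer at the end. Since $D_{\mathrm{time}}$ does not preserve $\^{(X_n)}_{\alpha_x}$ for finite $n$, your plan is to lift a regular derivation $D$ on $R_n := \^{\cO_{X_n,\alpha_x}}$ to a regular derivation $\tilde D$ on $R_\infty := \^{\cO_{X_\infty,\alpha_x}}$ and run the commutator argument there. You write down the equations the unknowns $\tilde D(x_i^{(l)})$, $l > n$, must satisfy, but you explicitly leave their ``compatible solvability'' open — and this is precisely where the argument stops. The leading coefficients of these inductive systems are $\partial f^{(l)}/\partial x_i^{(l)} = (\partial f/\partial x_i)^{(0)}$, which vanish modulo $\fm_{\alpha_x}$ exactly because $x$ is a singular point of $X$; so the systems you need to solve are degenerate precisely in the case of interest, and their solvability is not automatic. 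There is also a secondary worry even in the $n = \infty$ case: the $\mathbb{G}_m$-action on the non-Noetherian completion $R_\infty$ gives only a pro-grading, so the existence of a ``weight-$0$ component'' of $D$ and the convergence of $\exp(s \cdot \operatorname{ad}(D_{\mathrm{time}}))(D)$ as a derivation of $R_\infty$ require more care than you give them; the paper sidesteps this entirely by arguing on $A_n$ first. As it stands, the proposal covers only $n = \infty$ and only modulo these analytic issues; the finite jet spaces, which make up most of the theorem, are not handled.
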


In particular, this partially recovers the aforementioned result of \cite{CdFD22} for arcs fully contained in the respective isosingular locus, see \cref{p:not-dgk}. We should emphasize that the statement of \cref{t:main-thm} holds only for constant arcs. In fact, the formal neighborhood of any nonconstant arc admits a smooth factor. This suggests that, in some sense, the worst singularities of the arc space are located at its constant arcs. A precise statement is mentioned in \cref{p:comparison}.

\subsection*{Acknowledgements}
We are very grateful to the organizers of the Mini workshop of singularities 2023 at the University of Tokyo for their hospitality. We would also like to thank Nero Budur for the useful discussions and comments.

%
%

\section{Jet and arc spaces}
\label{s:jets-arcs}

In this section we briefly review some of the basic properties of jet and arc spaces we will use in this paper. For a more comprehensive treatment we refer the reader to some of the excellent surveys such as \cite{Ish07,dF18}.

Throughout this section we will fix a field $k$. Let $X$ be a scheme over $k$. The $n$-th jet space $X_n$ of $X$ is the $k$-scheme representing the $n$-th jet functor of $X$
\[
    Y \mapsto \Hom_k(Y \times_k \Spec k[t]/(t^{n+1}),X).
\]
In particular, for each field extension $K \supset k$, the $K$-points of $X_n$ are in bijection with $n$-jets on $X$; that is, morphisms of the form
\[
    \alpha_n \colon \Spec K[t]/(t^{n+1}) \to X.
\]
Note that $X_0 = X$ and the first jet space $X_1$ is just the total space of the Zariski tangent bundle of $X$.

For each $m\geq n$ we have induced morphisms $\pi_{m,n} \colon X_m \to X_n$ forming an inverse system. As all $\pi_{m,n}$'s are affine, the inverse limit exists in the category of $k$-schemes and is called the arc space $X_\infty$ of $X$. By definition, $X_\infty$ represents the functor
\[
    Y \mapsto \Hom_k(\varinjlim_n Y \times \Spec k[t]/(t^{n+1}),X).
\]
In particular, the $K$-points of $X_\infty$ are in bijection with arcs on $X$
\[
    \alpha \colon \Spec K[[t]] \to X.
\]
The arc space comes attached with induced projections $\pi_n \colon X_\infty \to X_n$. Write $\eta$ for the generic point and $0$ for the unique closed point of $\Spec K[[t]]$. Then $\pi_0 \colon X_\infty \to X$ is given by $\alpha \mapsto \alpha(0)$, where we identify a $K$-point of $X_\infty$ with its corresponding arc. We say that $\alpha$ is centered at $\alpha(0)$.

Let $n \in \N \cup \{\infty\}$. We define a section $\iota_n \colon X \to X_n$ of $\pi_{n,0} \colon X_n \to X$ as follows. For each $k$-scheme $S$ and $S$-point $x$, the image of $x$ is given as the $S$-point of $X_n$ corresponding to the composition
\[
    \xymatrix{ S \times_k \Spec k[t]/(t^{n+1}) \ar[r]^-{\pr_1} & S \ar[r]^-x & X,}
\]
if $n\in \N$; and for $n=\infty$
\[
    \xymatrix{ \varinjlim_n S \times_k \Spec k[t]/(t^{n+1}) \ar[r]^-{\pr_1} & S \ar[r]^-x & X.}
\]
If $x \in X$ with residue field $k_x$, then $\alpha_x := \iota_n(x) \in X_n(k_x)$ is called the \emph{constant jet} or \emph{constant arc} centered at $x$. Note that the constant arc centered at $x$ satisfies $\alpha_x(0) = \alpha_x(\eta) = x$. If $x \in X(k)$ is a closed point, then $\alpha_x$ is the unique point of $X_\infty$ with this property; in general, $\alpha_x$ is only maximal with respect to specialization.

\begin{example}
    Consider the arc $\alpha$ on $\A^2$ given by
    \[
        k[x,y] \to k((x))[[t]],\: x\mapsto x + t, y \mapsto 0.
    \]
    Then $\alpha$ is a nonconstant arc with $\alpha(0) = \alpha(\eta)$.
\end{example}

Jet and arc spaces are functorial with respect to their base scheme. That is, if $f \colon X \to Y$ is a morphism of $k$-schemes and $n \in \N \cup \{\infty\}$, then there exists an induced morphism $f_n \colon X_n \to Y_n$. One of the fundamental properties of jet and arc spaces (which follows essentially from the definitions) is that, for \'etale morphisms $f$, the induced morphism $f_n$ is given by base change and is hence again \'etale \cite[Proposition 2.5]{Ish07}. We will mention here a variant of this statement at the level of formal neighborhoods of closed points. As we are dealing with completions of non-Noetherian schemes which differ in subtle ways from the Noetherian case, we include a short alternative argument for the reader's convenience.

For any $k$-scheme $X$ and $x \in X(k)$ the formal neighborhood of $X$ at $x$ is defined as
\[
    \^X_x := \Spf \^{\cO_{X,x}},
\]
where $\^{\cO_{X,x}}$ denotes the completion of $\cO_{X,x}$ with respect to its maximal ideal $\fm_x$, endowed with the inverse limit topology. If $\fm_x$ is not finitely generated, then this topology no longer agrees with the $\fm_x$-adic topology. Moreover, the functor of points of the formal scheme $\^X_x$ is not determined by its values over Artinian rings. Instead, one has to allow general \emph{test rings}, that is, local $k$-algebras $(A,\fm_A)$ with $A/\fm_A = k$ and $\fm_A^d = 0$ for some $d$.

Let $n\in \N \cup \{\infty\}$ and $\alpha \in X_n$ with $x := \alpha(0)$. For any test-ring $(A,\fm_A)$ consider the set $\^{(X_n)}_\alpha(A)$ of $A$-deformations of $\alpha$. This set is in bijection to
\[
    \{ \~\alpha \colon \Spec A[[t]] \to X \mid \~\alpha \equiv \alpha \mod \fm_A \}.
\]
Each such morphism $\~\alpha$ factors through $\Spec \cO_{X,x}$; consider the induced ring map
\[
    \~\alpha^\sharp \colon \cO_{X,x} \to A[[t]].
\]
As $A$ is discrete we have that $A[[t]]$ is a complete local ring and thus $\~\alpha^\sharp$ factors through the completion $\^{\cO_{X,x}}$. To summarize, the formal neighborhood of $\alpha \in X_n$ depends only on the formal neighborhood of $x = \alpha(0) \in X$.

\begin{lemma}[{\cite[Proposition 2.5]{Ish07}}]
\label{l:isom-formal-nbhd}
    Let $X$ be a scheme over $k$ and $n\in \N \cup \{\infty\}$. Let $\alpha \in X_n(k)$ and $x = \alpha(0)$. Assume that there exists a $k$-scheme $Y$, $y\in Y(k)$, and an isomorphism of formal neighborhoods $\^X_x \isom \^Y_y$. Then there exists an induced isomorphism
    \[
        \^{(X_n)}_\alpha \isom \^{(Y_n)}_\beta,
    \]
    where $\beta \in Y_\infty$ is the image of $\alpha$.
\end{lemma}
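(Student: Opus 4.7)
The plan is to verify the isomorphism at the level of the functors of points on test rings and then invoke Yoneda. The hypothesis gives an isomorphism $\varphi \colon \widehat{\cO_{X,x}} \isom \widehat{\cO_{Y,y}}$ of complete local $k$-algebras, and the whole proof amounts to transporting the relevant ring maps along $\varphi$.

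First, I would define $\beta$ explicitly. By the discussion preceding the statement, the jet (or arc) $\alpha$ induces a continuous ring map $\alpha^\sharp \colon \widehat{\cO_{X,x}} \to k[t]/(t^{n+1})$ (respectively $k[[t]]$ for $n=\infty$). Setting $\beta^\sharp := \alpha^\sharp \circ \varphi^{-1}$ and precomposing with the completion map $\cO_{Y,y} \to \widehat{\cO_{Y,y}}$ yields a morphism $\Spec k[t]/(t^{n+1}) \to Y$ (respectively $\Spec k[[t]] \to Y$) sending the closed point to $y$, and hence defines $\beta$ centered at $y$. Note that $\beta^\sharp$ sends the maximal ideal of $\widehat{\cO_{Y,y}}$ into $(t)$, because $\varphi^{-1}$ is an isomorphism of local rings and $\alpha$ is centered at $x$.

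Second, I would apply the functor-of-points description established immediately before the lemma. For any test ring $(A,\fm_A)$, an $A$-valued point of $\widehat{(X_n)}_\alpha$ is precisely a continuous $k$-algebra homomorphism $\widehat{\cO_{X,x}} \to A[t]/(t^{n+1})$ (respectively $A[[t]]$) whose reduction modulo $\fm_A$ is $\alpha^\sharp$; the analogous description holds for $\widehat{(Y_n)}_\beta(A)$. Then the assignment $\widetilde\alpha^\sharp \mapsto \widetilde\alpha^\sharp \circ \varphi^{-1}$ is a bijection between these sets, with inverse $\widetilde\beta^\sharp \mapsto \widetilde\beta^\sharp \circ \varphi$, and is manifestly natural in $A$. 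Since a formal $k$-scheme of the form $\Spf R$ for $R$ a complete local $k$-algebra with residue field $k$ is determined by its functor of points on test rings, this natural bijection yields the asserted isomorphism $\widehat{(X_n)}_\alpha \isom \widehat{(Y_n)}_\beta$.

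The only nontrivial input is the functorial description of the formal neighborhoods of $\alpha$ and $\beta$ in the potentially non-Noetherian setting, i.e.\ the identification of $\widehat{(X_n)}_\alpha(A)$ with continuous deformations of $\alpha^\sharp$ on the completion $\widehat{\cO_{X,x}}$; this is precisely what the paragraph preceding the lemma establishes, so after that point there is no genuine obstacle and the proof is essentially formal.
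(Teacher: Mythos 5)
Your proof is correct and follows essentially the same route the paper takes: the paper does not give a separate proof of this lemma, but rather lets it follow from the discussion in the paragraph immediately preceding it, which establishes that the functor of points of $\^{(X_n)}_\alpha$ on test rings is controlled entirely by continuous ring maps out of $\^{\cO_{X,x}}$. Your argument makes this explicit (including the point that for $n\in\N$ the target is $A[t]/(t^{n+1})$ rather than $A[[t]]$, a detail the paper glosses over) and correctly identifies $\beta$ via transport along $\varphi$, which is what the paper's phrase ``the image of $\alpha$'' must mean since no morphism $X\to Y$ is assumed.
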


As an immediate consequence, we obtain the easy implication in \cref{t:main-thm}.

\begin{lemma}
    Let $X$ be a scheme over a field $k$ and $x \in X(k)$. Assume that $\^X_x \isom \^Y_y \times \Spf k[[z]]$, with $Y$ some $k$-scheme and $y\in Y$. For $n\in \N \cup \{\infty\}$ denote by $\alpha_x \in X_n$ and $\beta_y \in Y_n$ the constant $n$-jets resp.\ arcs centered at $x$ and $y$. Then
    \[
        \^{(X_n)}_{\alpha_x} \isom \^{(Y_n)}_{\beta_y} \times \Spf k[[z_0,\ldots,z_n]].
    \]
\end{lemma}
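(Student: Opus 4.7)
The plan is to rewrite the product $\^Y_y \times \Spf k[[z]]$ as the formal neighborhood of an honest scheme, apply \cref{l:isom-formal-nbhd}, and exploit the compatibility of jet/arc spaces with products.

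First, I would observe that $\Spf k[[z]] \isom \^{(\A^1_k)}_0$ and that, via the completed tensor product description of products of formal neighborhoods, $\^Y_y \times \Spf k[[z]] \isom \^{(Y \times_k \A^1_k)}_{(y,0)}$. This rewrites the hypothesis as $\^X_x \isom \^{(Y \times_k \A^1_k)}_{(y,0)}$ and lets me invoke \cref{l:isom-formal-nbhd} to obtain
\[
    \^{(X_n)}_{\alpha_x} \isom \^{((Y \times_k \A^1_k)_n)}_{\alpha_{(y,0)}}
\]
for every $n \in \N \cup \{\infty\}$.

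Next, the $n$-jet functor commutes with finite products, being a Weil restriction along $\Spec k[t]/(t^{n+1}) \to \Spec k$ (hence a right adjoint); for $n = \infty$ this passes through the inverse limit defining $X_\infty$. Thus $(Y \times_k \A^1_k)_n \isom Y_n \times_k (\A^1_k)_n$, and by the functoriality of the section $\iota_n$ recalled in \cref{s:jets-arcs}, the constant jet $\alpha_{(y,0)}$ decomposes as the pair $(\beta_y, \alpha_0)$ with $\alpha_0 \in (\A^1_k)_n$ the constant jet/arc at $0$. Reapplying the product decomposition of formal neighborhoods in the other direction gives
\[
    \^{((Y \times_k \A^1_k)_n)}_{(\beta_y, \alpha_0)} \isom \^{(Y_n)}_{\beta_y} \times \^{((\A^1_k)_n)}_{\alpha_0}.
\]
To finish, I would compute the second factor directly: $(\A^1_k)_n \isom \Spec k[z_0, \ldots, z_n]$ (with universal jet $\sum_i z_i t^i$), respectively $\Spec k[z_0, z_1, \ldots]$ for $n = \infty$, and under this identification the constant jet/arc at $0$ is the origin. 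Its formal neighborhood is therefore $\Spf k[[z_0, \ldots, z_n]]$, which produces the required decomposition.

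The only delicate point is the product decomposition of formal neighborhoods in the non-Noetherian setting, since $X_\infty$ is typically non-Noetherian and the formal neighborhoods used here carry the inverse-limit rather than the $\fm$-adic topology. Working with the test-ring formalism introduced above, however, this reduces to the tautology that an $A$-deformation of a pair $(\beta_y, \alpha_0)$ is the same data as a pair of independent $A$-deformations of the two factors, which is immediate from the universal property of the product.
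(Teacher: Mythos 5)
Your proof is correct and follows essentially the same route as the paper's, which simply cites \cref{l:isom-formal-nbhd} together with the smoothness of $X_n$ for $X$ smooth (here $(\A^1)_n \isom \A^{n+1}$, resp.\ $\Spec k[z_0,z_1,\ldots]$). You have made explicit the intermediate steps that the paper leaves implicit — rewriting the product as $\^{(Y\times_k\A^1)}_{(y,0)}$, the compatibility of the jet/arc functor with products via Weil restriction, and the careful handling of the product decomposition of formal neighborhoods via test rings in the non-Noetherian setting — which is a useful elaboration of the same argument rather than a different one.
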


\begin{proof}
    Follows from \cref{l:isom-formal-nbhd} and the fact that, for $X$ smooth of dimension $d$, the jet space $X_n$ is smooth of dimension $d(n+1)$.
\end{proof}

%
%

\section{Derivations and cancellation}
\label{s:derivations}

In this section we collect some results on derivations on the rings underlying formal neighborhoods of non-Noetherian schemes over $k$. For convenience we assign these rings a name.

\begin{definition}
    A \emph{quasi-adic ring} is a local $k$-algebra $(R,\fm)$ which is the completion of a local ring $(A,\fm_A)$ with $A/\fm_A = k$, where we require that if $R$ is Noetherian then so is $A$. We consider $R$ as a topological ring endowed with its inverse limit topology.
\end{definition}

The prefix ``quasi'' here points to the fact that, in case $A$ is not Noetherian, the topology on $R$ can be approximated by the $\fm$-adic one. More precisely, a basis for the open subsets is given by the topological closure of powers $\fm^n$. By definition, any Noetherian quasi-adic ring $R$ is adic.

\begin{example}
    \label{ex:whitney}
    The requirement for $A$ to be Noetherian in case $R$ is Noetherian is motivated by \cite[Example 5.4]{CdFD22}. Consider the following example by Whitney: take $g(t) = \sum_{n\geq 1} g_n t^n$ to be any transcendental series in $\C[[t]]$ and set
    \[
        f := x y (y-x) (y - (3+t)x) (y - (4 + g(t))x) \in \C[[x,y,t]].
    \]
    In \cite{Whi65} it is proven that the Noetherian ring $R := \C[[x,y,t]]/(f)$ is not isomorphic to the completion of a $\C$-algebra essentially of finite type over $\C$. However, in \cite[Example 5.4]{CdFD22} it is shown that $R$ is the completion of the non-Noetherian ring $\C[x,y,t,z_n \mid n \in \Z_{\geq0}]/\fa$, where
    \[
        \fa := (h, z_{n-1} - z_n t - g_n t \mid n \in \Z_{>0})
    \]
    and
    \[
        h := x y (y-x) (y - (3+t)x) (y - (4 + z_0)x).
    \]
\end{example}

Translating the previous definition into the algebraic language, we say that a quasi-adic ring $(R,\fm)$ admits a smooth factor if $\Spf R \isom \cZ \times \Spf k[[z]]$ for some formal scheme $\cZ$. Note that $\cZ$ is necessarily affine, so $R$ has a smooth factor if there exists an admissible ring $S$ (see \cite[0, (7.1.2)]{ega-i}) and an isomorphism of topological rings $R \isom S \cotimes_k k[[z]]$, the completed tensor product of the topological rings $S$ and $k[[z]]$ (see \cite[0, (7.7.6)]{ega-i}).

The main tool in the proof of \cref{t:main-thm} are derivations. Even though we will restrict our attention to the characteristic $0$ case in the next section, here we give the characteristic-free version of the results we make use of. For a more detailed account we refer the reader to \cite{BS19}.

Let $R$ be any $k$-algebra. A \emph{higher derivation} of $R$ is a ring map $d = \sum_{i \geq 0} d_i \colon R\to R[[t]]$ such that $d_0 = \id_R$. The set of all higher derivations of $R$ will be denoted by $\Der_k^\infty(R)$. If $(R,\fm)$ is quasi-adic, then $d \in \Der_k^\infty(R)$ is called \emph{regular} if it is continuous (with respect to the induced topology on $R[[t]] \isom R \cotimes_k k[[t]]$) and there exists $x \in \fm$ with $d_1(x) \in R$ invertible. If $k$ is of characteristic $0$, then any usual derivation $d \in \Der_k(R)$ extends uniquely to a higher derivation.

\begin{lemma}
    \label{l:regular}
    Let $(R,\fm)$ be the completion of $(A,\fm_A)$. Assume that there exists $d \in \Der_k^\infty(R)$ regular. Then there exists $d' \in \Der_k^\infty(R)$ and $x \in \fm_A$ with $d'(x) = x + t$.
\end{lemma}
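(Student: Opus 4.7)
The plan is to construct $d'$ from the given $d$ by two successive re-parametrizations of the variable $t$, after first transferring the regularity witness from $\fm$ into the smaller ideal $\fm_A$. The conclusion requires $x \in \fm_A$, that is, an element of the un-completed ring $A$ whose image lies in $\fm$. Since $d$ is continuous, the coefficient map $d_1 \colon R \to R$ (as a projection of $d$ onto the $t^1$-component) is continuous; the unit group $R^\times = R \setminus \fm$ is open; and the image of $A$ is dense in $R$ by the completion hypothesis. Together these three facts let us approximate the given regularity witness by some $x \in A$ with $d_1(x) \in R^\times$, and such an $x$ automatically lies in $\fm_A = A \cap \fm$.

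Set $u := d_1(x) \in R^\times$. The second step is to rescale: the continuous $R$-algebra endomorphism $\sigma_1$ of $R[[t]]$ fixing $R$ and sending $t \mapsto u^{-1} t$ preserves $tR[[t]]$, so $d^{(1)} := \sigma_1 \circ d$ is again a continuous higher derivation. Its linear coefficient at $x$ is $u^{-1} \cdot u = 1$, hence
\[
    d^{(1)}(x) = x + t + \sum_{i \geq 2} c_i t^i
\]
for some $c_i \in R$. For the third step, set $\psi(t) := t + \sum_{i \geq 2} c_i t^i$; since $\psi'(0) = 1$ is a unit, $\psi$ admits a compositional inverse $\phi \in tR[[t]]$. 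The substitution $\sigma_2 \colon t \mapsto \phi(t)$ is another continuous $R$-algebra endomorphism of $R[[t]]$, and $d' := \sigma_2 \circ d^{(1)}$ is a higher derivation satisfying $d'(x) = x + \psi(\phi(t)) = x + t$, as required.

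The main obstacle is the approximation argument in Step 1. Steps 2 and 3 are purely formal manipulations with substitutions in $R[[t]]$, and it is immediate from the definitions that such substitutions send higher derivations to higher derivations. What requires care is using the continuity built into the definition of \emph{regular} --- together with the density of the image of $A$ in $R$ --- to move the witness inside the possibly much smaller ideal $\fm_A$. This is also where the hypothesis that $R$ is the completion of $A$ really enters the argument; without it one would be stuck with a witness in $\fm$ that need not come from $A$ at all.
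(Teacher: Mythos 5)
Your proof is correct, and its key step takes a genuinely different route from the paper's. Where the paper produces the witness in $\fm_A$ by choosing a surjection $k[X_j \mid j\in J]_{(X_j)} \twoheadrightarrow A$, completing to $k[[X_j \mid j\in J]] \twoheadrightarrow R$, writing the given witness as $x = f(x_j \mid j\in J)$, and then invoking the chain rule (justified by continuity of $d$) to conclude that $d_1(x_j)$ must be a unit for some $j$, you argue purely topologically: $d_1$ is continuous, $R^\times$ and $\fm$ are open, so $d_1^{-1}(R^\times) \cap \fm$ is a nonempty open set which by density of the image of $A$ contains the image of some $a \in A$; and since $a$ maps into $\fm$, it lies in $\fm_A$. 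Both uses of continuity are essential and serve the same purpose, but yours avoids the choice of a presentation, while the paper's is more explicit and hands you a generator of $\fm_A$ as the witness. For the normalization step $d'(x) = x + t$, the paper simply cites \cite[Lemma 2.1]{BS19}; you spell this out as a composition of two reparametrizations $t \mapsto u^{-1}t$ and $t \mapsto \phi(t)$ (with $\phi$ the compositional inverse), which is the same content and is carried out correctly. One small point worth making explicit: to ensure the approximating element lands in $\fm_A$, one should shrink the neighborhood so it is contained in $\fm$ (possible since $\fm$ is open and contains the original witness); you do indicate this with ``automatically lies in $\fm_A = A \cap \fm$,'' which is the right idea, modulo interpreting $A \cap \fm$ as the preimage of $\fm$ under $A \to R$ when the completion map is not injective.
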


\begin{proof}
    By assumption there exists $x \in \fm$ such that $d_1(x)$ is invertible. Then there exists an index set $J$ and a surjection of local rings $k[X_j \mid j\in J]_{(X_j \mid j\in J)} \to A$, mapping $X_j$ to $x_j \in \fm_A$. Completing gives rise to a surjection $k[[X_j \mid j\in J]] \to R$, hence there exists $f \in k[[X_j \mid j\in J]]$ such that
    \[
        x = f(x_j \mid j \in J).
    \]
    By continuity of $d$ we have that $d_1(x_j)$ is invertible for some $j \in J$. Then we proceed as in \cite[Lemma 2.1]{BS19}.
\end{proof}

The following is the main result of this section and, in case $R$ is Noetherian, is sometimes known as the Zariski--Lipman--Nagata criterion.

\begin{theorem}
    \label{t:triviality}
    Let $(R,\fm)$ be a quasi-adic ring. The following are equivalent:
    \begin{enumerate}
        \item \label{i1:triviality} There exists a regular $d \in \Der_k^\infty(R)$.
        \item \label{i2:triviality} $R$ admits a smooth factor.
        \item \label{i3:triviality} There exists $(S,\fn)$ quasi-adic with $R \isom S \cotimes_k k[[z]]$.
    \end{enumerate}
\end{theorem}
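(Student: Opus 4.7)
My plan is to dispatch the easy equivalences and concentrate on $(1) \Rightarrow (3)$.

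The equivalence $(2) \Leftrightarrow (3)$ is essentially definitional: a decomposition $\Spf R \isom \cZ \times \Spf k[[z]]$ of formal schemes corresponds to an isomorphism $R \isom S \cotimes_k k[[z]]$ of topological rings, with $\cZ = \Spf S$. To ensure $S$ is quasi-adic when starting from $(2)$, I use the retraction $k[[z]] \to k$, $z \mapsto 0$, to induce a surjection $R \surj S$; starting from a presentation $A \to R$ of $R$ as a quasi-adic ring, the image $B$ of $A$ in $S$ is a local $k$-algebra whose completion is $S$, and the Noetherianity clause of the definition propagates through $A \surj B$. For $(3) \Rightarrow (1)$, the $S$-linear ring map $z \mapsto z + t$ defines a continuous $d \in \Der_k^\infty(R)$ with $d_1(z) = 1$, hence regular.

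The substantive direction is $(1) \Rightarrow (3)$. First, I apply \cref{l:regular} to replace the given derivation by a pair $(d', x)$ with $d' \in \Der_k^\infty(R)$, $x \in \fm_A$, and $d'(x) = x + t$. Then I consider the continuous ring endomorphism
\[
    \phi \colon R \to R, \quad \phi(r) := \sum_{i \geq 0} d'_i(r)(-x)^i,
\]
obtained as $d' \colon R \to R[[t]]$ composed with the evaluation $t \mapsto -x$; convergence is guaranteed by continuity of $d'$ together with $x \in \fm$. A direct computation gives $\phi(x) = 0$, and combined with $d'_0 = \id$ this immediately yields $\phi^2 = \phi$. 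Hence $S := \phi(R) = R^\phi$ is a retract of $R$.

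To finish, I show every $r \in R$ admits a unique convergent expansion $r = \sum_{i \geq 0} s_i x^i$ with $s_i \in S$. Uniqueness follows because $x$ is a non-zero-divisor in $R$ (from $d'_1(x) = 1$). For existence, the explicit form of $\phi$ makes it transparent that $r - \phi(r) \in xR$, so I can set $s_0 := \phi(r)$, write $r - s_0 = x r_1$ with $r_1$ unique, and iterate. Convergence of $\sum s_i x^i$ to $r$ follows from $x \in \fm$. This produces the desired isomorphism $R \isom S \cotimes_k k[[x]]$, and the same retraction-bookkeeping as in the first paragraph confirms that $S$ is quasi-adic. I expect the main obstacle to be the topological subtlety intrinsic to the non-adic quasi-adic setting: the topology on $R$ need not agree with the $\fm$-adic one, so the convergence of $\phi$, the convergence of the expansion $r = \sum s_i x^i$, and the continuity of the resulting isomorphism must all be tracked carefully against the inverse-limit topology coming from the chosen presentation $A \to R$.
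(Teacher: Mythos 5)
Your overall strategy for $(1) \Rightarrow (3)$ — apply \cref{l:regular} to replace the derivation by one satisfying $d'(x) = x + t$ with $x \in \fm_A$, then build the idempotent retraction $\phi(r) = \sum_{i} d'_i(r)(-x)^i$, set $S = \phi(R) = R/xR$, and expand every $r$ as $\sum s_i x^i$ — is essentially a from-scratch reconstruction of the argument in \cite[Theorem~1.1]{BS19}, which the paper simply cites, plus the paper's own fix via \cref{l:regular}. That core is sound (modulo the topological bookkeeping you flag).

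The problem is in your treatment of $(2) \Rightarrow (3)$, and in the way you appeal back to it. You claim that, given the retraction $R \twoheadrightarrow S$ coming from $z \mapsto 0$ and a presentation $A \to R$, ``the image $B$ of $A$ in $S$ is a local $k$-algebra whose completion is $S$.'' This is precisely the false step that the paper's remark immediately after \cref{t:triviality} isolates in \cite[Lemma~2.3]{BS19}, with the Whitney example as a counterexample: for $R = k[[x,y,t]]$, $A = k[x,y,t]_{(x,y,t)}$, and $S = R/(f)$ with $f$ as in \cref{ex:whitney}, the image of $A$ in $S$ is isomorphic to $A$, and its completion is $R$, not $S$. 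The issue is that the kernel of the retraction need not be generated by an ideal of $A$, so there is no reason for $\widehat{\pi(A)} \cong S$. The reliable route is therefore $(2) \Rightarrow (1) \Rightarrow (3)$: from a smooth factor you read off a regular higher derivation via $z \mapsto z + t$ (exactly your $(3)\Rightarrow(1)$ argument applied verbatim), and then your $(1)\Rightarrow(3)$ proof takes over. Relatedly, when you finish $(1)\Rightarrow(3)$ by saying ``the same retraction-bookkeeping as in the first paragraph confirms that $S$ is quasi-adic,'' you are invoking the gapped argument rather than the correct one. The correct justification available to you is: since $x \in \fm_A$ (this is the whole point of \cref{l:regular}), the ideal $(x) \subset A$ exists, $\ker \phi = xR$ is closed because $\phi$ is continuous, $(xR \cap A)R = xR$, and hence $S \cong R/xR$ is the completion of the local ring $A/(x)$; the Noetherian clause then propagates since $A/(x)$ is Noetherian whenever $A$ is. Spelling that out, instead of referring back to the first paragraph, closes the remaining gap.
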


\begin{proof}
    For the complete proof we refer the reader to \cite[Theorem 1.1]{BS19}; here we just give a brief remark arguing that \eqref{i1:triviality} implies \eqref{i3:triviality} (and not just \eqref{i2:triviality}). Given $d \in \Der_k^\infty$ continuous and $x\in \fm$, it is proven that there exists a subring $S$ of $R$ and a continuous isomorphism
    \[
        \theta \colon S[[Y]] \to R,\: \sum_{i\geq0} f_i Y^i \to \sum_{i\geq0} f_i x^i.
    \]
    Assume that $(R,\fm)$ is the completion of $(A,\fm_A)$. By \cref{l:regular}, we may choose $x\in \fm_A$, and then $\theta$ induces an isomorphism of $S$ with the completion of $A/(x)$.
\end{proof}

\begin{remark}
    \label{r:char0}
    If $k$ has characteristic $0$, then in \eqref{i1:triviality} it is enough to assume the existence of a (usual) derivation $d \in \Der_k(R)$ which is regular, i.e.\, continuous and such that there exists $x\in \fm$ with $d(x)$ invertible.
\end{remark}

\begin{remark}
    In \cite{BS19}, the fact that in \eqref{i3:triviality} of \cref{t:triviality} the ring $S$ is the completion of a local ring is obtained by means of \cite[Lemma 2.3]{BS19}. Note that the proof given there has a gap: in the last sentence it is claimed that $S$ is the completion of $\pi(A)$, which is not true in general. For a counterexample consider the ring $S = k[[x,y,t]]/(f)$ with $f$ as in \eqref{ex:whitney} and let $A = k[x,y,t]_{(x,y,t)}$, $R = k[[x,y,t]]$. The image $\pi(A)$ in $S$ is isomorphic to $A$, so in particular to completion of $\pi(A)$ is not isomorphic to $S$.
\end{remark}

A consequence of \cref{t:triviality} is the following cancellation result. For a proof we refer the reader to \cite{BS19}.

\begin{theorem}[{\cite[Theorem 3.1]{BS19}}]
    \label{t:cancellation}
    Let $(R,\fm)$ and $(S,\fn)$ be quasi-adic rings which do not admit a smooth factor. Assume that there exist index sets $I$ and $J$ and an isomorphism of topological rings
    \[
        \varphi \colon R \cotimes_k k[[x_i \mid i\in I]] \isom S \cotimes_k k[[y_j \mid j \in J]].
    \]
    Then $\varphi$ restricts to an isomorphism $R \isom S$.
\end{theorem}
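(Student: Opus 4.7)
My plan is to use \cref{t:triviality} as the principal input and run a two-stage argument: first reduce the problem to cancelling one smooth variable at a time, then handle the one-variable case by transporting a regular derivation across the isomorphism $\varphi$.

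\textbf{Reduction via Zorn's lemma.} Consider the partially ordered set of triples $(I_0, J_0, \psi)$ consisting of subsets $I_0 \subset I$, $J_0 \subset J$ and continuous isomorphisms
\[
    \psi \colon R \cotimes[k] k[[x_i \mid i\in I\setminus I_0]] \isom S \cotimes[k] k[[y_j \mid j \in J\setminus J_0]]
\]
compatible with $\varphi$, and apply Zorn's lemma (chains glue together using continuity of the completed tensor product). A maximal element with $I_0 = I$ and $J_0 = J$ directly yields $R \isom S$; if either containment is strict, one needs to show that some $x_i$ can be cancelled against some $y_j$, which reduces the problem to the one-variable case. At every step one has to verify that the truncated rings on the two sides still admit no smooth factor — by \cref{t:triviality} equivalently that they carry no new regular derivations — a property that propagates from the hypothesis on $R$ and $S$.

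\textbf{The single-variable case.} Assume $R \cotimes[k] k[[x]] \isom S \cotimes[k] k[[y]]$ continuously. The derivation $\partial/\partial y$ is regular on the right, so $d := \varphi^{-1} \circ (\partial/\partial y) \circ \varphi$ — or, in positive characteristic, its higher-derivation analogue whose existence is ensured by \cref{l:regular} — is a regular continuous higher derivation on $R \cotimes[k] k[[x]]$. The isomorphism $\theta$ constructed in the proof of \cref{t:triviality} then produces a subring $R_0$ together with a topological isomorphism $R_0 \cotimes[k] k[[z]] \isom R \cotimes[k] k[[x]]$; by symmetry one obtains $S_0$ with $S_0 \cotimes[k] k[[w]] \isom S \cotimes[k] k[[y]]$. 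The original $\varphi$ then transports to an isomorphism $R_0 \isom S_0$, so it suffices to prove $R_0 \isom R$ (and similarly $S_0 \isom S$).

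\textbf{The main obstacle.} Showing $R_0 \isom R$ is the crux of the argument. Intuitively, since $R$ admits no regular derivation, the only smooth direction of $R \cotimes[k] k[[x]]$ is the $x$-direction, so any splitting $R \cotimes[k] k[[x]] \isom R_0 \cotimes[k] k[[z]]$ must agree with the obvious one up to a continuous automorphism. Making this precise requires a careful comparison of $d$ with $\partial/\partial x$ modulo the closed ideal generated by $\fm \subset R$, and — more delicately — verifying that $R_0$ is genuinely quasi-adic and still lacks a smooth factor. The latter point is precisely the subtle issue flagged in the remark preceding the statement: rather than trying to identify $R_0$ with the completion of $\pi(A)$ for a projection $\pi$, one must construct $R_0$ directly as the completion of a suitably chosen local subring of the local ring underlying $R$, then check that the induced topology matches the one inherited from $R \cotimes[k] k[[x]]$.
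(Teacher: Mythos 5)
The paper does not prove \cref{t:cancellation}: it states the result with a reference to \cite[Theorem~3.1]{BS19}, so there is no internal argument of the paper to compare yours against. Judged on its own terms, your proposal has a circular core.

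In your single-variable step, set $d := \varphi^{-1} \circ (\partial/\partial y) \circ \varphi$ and let $R_0 \subset R \cotimes_k k[[x]]$ be the subring produced by the proof of \cref{t:triviality}. Since $\varphi$ intertwines $d$ with $\partial/\partial y$, it carries the subring of $d$-constants isomorphically onto the subring of $\partial/\partial y$-constants of $S \cotimes_k k[[y]]$, and the latter is exactly $S$ (an element $f(y)$ fixed by $y \mapsto y+t$ is $y$-independent). Thus $R_0 \isom S$ already. The step you label ``the main obstacle,'' namely $R_0 \isom R$, is therefore the statement $R \isom S$ that you set out to prove: transporting the derivation across $\varphi$ has not shrunk the problem, it has reproduced it. (The claimed intermediate isomorphism $R_0 \isom S_0$ also does not follow from $\varphi$ alone; $R_0$ and $S_0$ are two independently built subrings, and nothing forces $\varphi$ to match them up --- what $\varphi$ gives you is $R_0 \isom S$ and $R \isom S_0$.)

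The Zorn reduction also does not chain as written. At a maximal element $(I_0, J_0, \psi)$ with both containments strict, cancelling $x_{i_0}$ against $y_{j_0}$ would require applying the single-variable lemma with $R' = R \cotimes_k k[[x_i \mid i \in I \setminus (I_0 \cup \{i_0\})]]$ and $S' = S \cotimes_k k[[y_j \mid j \in J \setminus (J_0 \cup \{j_0\})]]$ in place of $R$ and $S$; but these rings admit smooth factors whenever the residual index sets are nonempty, so the hypothesis of the one-variable statement fails for them. You would need either a one-variable cancellation that tolerates smooth factors on one side, or a genuinely different reduction --- for instance, one that identifies $R$ inside $R \cotimes_k k[[x_i \mid i \in I]]$ by an intrinsic, $\varphi$-equivariant characterization, which is the kind of argument a theorem of this sort typically rests on.
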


%
%

\section{Applications to formal neighborhoods of the arc space}
\label{s:formal-nbhd-arc}

Let us move towards the proof of the remaining implication in \cref{t:main-thm}. For that, we assume that $k$ is a field of characteristic $0$. Let $X$ be a scheme over $k$ and $x\in X(k)$. For $n\in \N \cup \{\infty\}$ let $\alpha_x \in X_n$ be the constant $n$-jet resp.\ arc. By \cref{t:triviality,r:char0} it is enough to show that, given a regular derivation $d \in \Der_k(\^{\cO_{X_n,\alpha_x}})$, there exists a regular derivation $d_0 \in \Der_k(\^{\cO_{X,x}})$.

We may assume that $X = \Spec A$ is affine. Then $X_n = \Spec A_n$, where $A_n$ denotes the $n$-th Hasse--Schmidt algebra of $A$ \cite{Voj07}. If $n \in \N$, then the universal $n$-jet $\gamma_n \colon A_n \to A_n[t]/(t^{n+1})$ is given as
\[
    \gamma_n(x) = \sum_{i=0}^n x^{(i)} t^i,
\]
and similarly for the universal arc $\gamma \colon A_\infty \to A_\infty[[t]]$/As described in \cref{s:jets-arcs}, the natural inclusion $p \colon A \to A_n$ corresponding to $\pi_{n,0}$ has a retraction $q \colon A_n \to A$ corresponding to $\iota_n$. If $\fp \subset A$ denotes the prime corresponding to $x$ and $\fp_n \subset A_n$ the prime corresponding to $\alpha_x$, we get a factorization of the identity on $A_\fp$ into
\[
    \xymatrix{A_\fp \ar[r]^-{p_0} & (A_n)_{\fp_n} \ar[r]^-{q_0} & A_\fp.}
\]
Here we use that $\alpha_x$ is the constant arc centererd at $x$. Set $R:= \^{A_\fp}$, $R_n := \^{(A_n)_{\fp_n}}$ and let $\^p$, $\^q$ be the completions of $p_0$ and $q_0$.

\begin{lemma}
    Let $d \in \Der_k(R_n)$ regular. Then there exists $d \in \Der_k(R)$ regular.
\end{lemma}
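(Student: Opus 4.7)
My plan is to construct a regular derivation on $R$ out of the given $d$ by the natural projection $d_0 := \^q \circ d \circ \^p$, which is automatically a $k$-derivation because $\^q \circ \^p = \id_R$ enforces the Leibniz rule. The real content is to verify regularity, and for this the key preliminary step is to reduce the regularity witness of $d$ to a ``level-zero'' generator.

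First I will normalize the setup. Writing $X = \Spec A$ with $A = k[x_j : j \in J]/I$ and translating coordinates, we may assume $x_j \in \fp$ for every $j$, so that $\fm_n$ is the topological closure of the ideal generated by the $x_j^{(i)}$. Mimicking the proof of \cref{l:regular}, I pick a continuous surjection $k[[X_j^{(i)}]] \twoheadrightarrow R_n$, write an arbitrary witness $y \in \fm_n$ as $y = F(x_j^{(i)})$, and apply the chain rule to $d(y) = \sum_{j,i}(\partial F/\partial X_j^{(i)})(x_j^{(i)}) \cdot d(x_j^{(i)})$; comparing residues, the invertibility of $d(y)$ forces $d(x_j^{(i)})$ to be invertible in $R_n$ for some individual pair $(j,i)$. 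When $i = 0$ the lemma is immediate, since $d_0(x_j) = \^q(d(x_j^{(0)}))$ is a unit because $\^q$ is a local ring map, and $x_j \in \fm$, so $d_0$ is regular.

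The substantive case is $i \ge 1$, which I reduce to $i = 0$ by a commutator trick with the total derivative. For $n = \infty$, the operator $D_t \in \Der_k(R_\infty)$ defined on generators by $D_t(x_j^{(l)}) = (l+1)\, x_j^{(l+1)}$ sends every generator into $\fm_\infty$; by Leibniz and continuity $D_t(R_\infty) \subseteq \fm_\infty$. Setting $d^{[0]} := d$ and $d^{[m]} := [d^{[m-1]}, D_t]$, an induction on $m$ using $D_t(\fm_\infty) \subseteq \fm_\infty$ yields the congruence
\[
    d^{[m]}(x_j^{(l)}) \;\equiv\; \tfrac{(m+l)!}{l!}\, d(x_j^{(m+l)}) \pmod{\fm_\infty}.
\]
Specializing to $m = i$ and $l = 0$ shows that $d^{[i]}(x_j^{(0)}) \equiv i!\,d(x_j^{(i)}) \pmod{\fm_\infty}$ is a unit, using $\charK k = 0$ to invert $i!$. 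Thus $d^{[i]}$ is a regular derivation on $R_\infty$ with level-zero witness $x_j^{(0)}$, and the easy case applied to $d^{[i]}$ produces the required regular derivation on $R$.

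The main obstacle I foresee is the finite-$n$ case, since $D_t$ does not descend to a derivation $R_n \to R_n$: the naive truncation $D_t(x_j^{(n)}) := 0$ is incompatible with the relations $D_t^{n+1} f \in I_n$ defining $A_n$. I expect this to be handled either by extending $d$ to a regular derivation of $R_\infty$ — using that $R_n$ is Noetherian, so \cref{t:triviality} provides a concrete smooth-factor decomposition $R_n \isom S \cotimes_k k[[z]]$ that one tries to promote across $R_n \hookrightarrow R_\infty$ — or by constructing an $R_n$-intrinsic derivation of positive $\mathbb{G}_m$-weight which can substitute for $D_t$ in the same commutator computation.
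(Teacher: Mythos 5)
Your chain-rule reduction to a single witness $x_j^{(i)}$ and the $i=0$ branch are fine, and the commutator computation with $D_t$ is correct when $n=\infty$: the identity $D_t(f^{(l)}) = (l+1)f^{(l+1)}$ ensures $D_t$ descends to $A_\infty$ and preserves $\fm_\infty$, and the congruence $d^{[i]}(x_j^{(0)}) \equiv i!\,d(x_j^{(i)}) \pmod{\fm_\infty}$ follows. But the finite-$n$ case is a genuine gap, not a loose end. You acknowledge that $D_t$ does not descend to $A_n$, and neither proposed workaround is carried out: there is no reason a continuous derivation on $R_n$ should extend to one on $R_\infty$ along the non-surjective inclusion $A_n \hookrightarrow A_\infty$, and no intrinsic weight-one derivation on $A_n$ is produced (the natural candidate, the Euler operator from $t \mapsto \lambda t$, has weight zero and does not shift levels). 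The paper in fact treats $n\in\N$ as the primary case and dismisses $n=\infty$ as ``similar'', so your argument has the weight on exactly the wrong end.

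The paper's route handles all $n$ uniformly and sidesteps $D_t$ entirely. It uses the Hasse--Schmidt adjunction $\Der_k(A_n,M) \isom \Der_k(A, M\otimes_{A_n} B_n)$ with $B_n = A_n[t]/(t^{n+1})$: applied to $M=R_n$ and composed with $\^q$, it produces a derivation $A \to R^{n+1}$, and because $\^q$ kills $a^{(i)}$ for $i\ge 1$ the induced $A$-module structure on $R^{n+1}$ collapses to the diagonal one, so the derivation splits into $n+1$ components $\delta_i(a) := \^q\bigl(d(a^{(i)})\bigr)$, each a derivation $A\to R$ extending to $R$. Your $d_0 = \^q\circ d\circ\^p$ is precisely $\delta_0$; the observation you are missing is that $\delta_i$ for $i\ge 1$ is \emph{already} a derivation, with no commutator needed. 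This can be checked by hand from $(ab)^{(i)}=\sum_{j+l=i}a^{(j)}b^{(l)}$ together with $\^q(a^{(j)})=0$ for $j\ge 1$, which makes all cross terms vanish. Your own chain-rule step then shows $\delta_i(x_j) = \^q(d(x_j^{(i)}))$ is a unit for the witness index $i$, so $\delta_i$ is regular. In retrospect, your commutator construction $d^{[i]}$ is, modulo $\fm_\infty$, reproducing $\delta_i$ in the only case ($n=\infty$) where $D_t$ is available; the adjunction gives all $\delta_i$ at once and makes the finite and infinite cases identical.
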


\begin{proof}
    For simplicity we assume $n\in \N$; the case $n=\infty$ is done similarly. Let $M$ be an $A_n$-module. Set $B_n := A_n[t]/(t^{n+1})$ and consider the map $\gamma_n \colon A \to B_n$ induced by the universal $n$-jet. Then we have isomorphisms (see \cite[Lemma 5.1]{dFD20})
    \begin{align*}
        \Der_k(A_n,M) & \isom \{\phi \in \Hom_k(A_n, A_n \oplus \varepsilon M) \mid \phi \equiv \id_{A_n} \mod \varepsilon\} \\
        & \isom \{\phi \in \Hom_k(A, B_n \oplus \varepsilon (M \otimes_{A_n} B_n)) \mid \phi \equiv \gamma_n \mod \varepsilon\} \\
        & \isom \Der_k(A,M \otimes_{A_n} B_n).
    \end{align*}
    Note that any continuous derivation $d \in \Der_k(R_n)$ is uniquely determined by its restriction in $\Der_k(A_n, R_n)$. Note that $R_n \otimes_{A_n} B_n \isom (R_n)^{n+1}$ as $A_n$-modules. Composing with $\^q \colon R_n \to R$ and by the above $d$ corresponds to a continuous derivation $d_0 \in \Der_k(R,R^{n+1})$. Note that $d$ regular implies that $d_0(x) \neq 0$ for some $x$ in the maximal ideal of $R$. It is straightforward to verify that
    \begin{equation}
    \label{eq:prod-der}
        \Der_k(R,R^{n+1}) \isom \prod_{i=1}^{n+1} \Der_k(R).
    \end{equation}
    and $d_0(x) \neq 0$ implies that one of the components of the image of $d_0$ is a regular derivation in $\Der_k(R)$. This proves the lemma.
\end{proof}

\begin{remark}
    It is not clear what the equivalent of the identity \eqref{eq:prod-der} should be for higher derivations. In fact, this is the main reason to restrict to the characteristic $0$ case.
\end{remark}

\begin{corollary}
    Let $X$ be a scheme of characteristic $0$ and $x\in X(k)$. For $n\in \N \cup \{\infty\}$ let $\alpha_x$ be the constant $n$-jet resp.\ arc. If $\^{(X_n)}_{\alpha_x}$ admits a smooth factor, then so does $\^X_x$.
\end{corollary}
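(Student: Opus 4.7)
The proof will be a direct synthesis of the preceding lemma with Theorem \ref{t:triviality} and Remark \ref{r:char0}. The strategy is to chase a regular derivation through the retraction $\^q \colon R_n \to R$ to transport the smooth factor from $R_n = \^{(A_n)_{\fp_n}}$ down to $R = \^{A_\fp}$.

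First I would reduce to the affine case $X = \Spec A$, so that $X_n = \Spec A_n$, and adopt the notation of the preceding lemma: let $\fp \subset A$ correspond to $x$ and $\fp_n \subset A_n$ correspond to $\alpha_x$, and set $R := \^{A_\fp}$, $R_n := \^{(A_n)_{\fp_n}}$. The assumption is that $\^{(X_n)}_{\alpha_x} \isom \Spf R_n$ admits a smooth factor. Since $k$ has characteristic $0$, Remark \ref{r:char0} combined with the implication \eqref{i2:triviality} $\Rightarrow$ \eqref{i1:triviality} of Theorem \ref{t:triviality} gives a regular (i.e.\ continuous and nontrivial on some element of the maximal ideal) derivation $d \in \Der_k(R_n)$.

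Next, I would invoke the preceding lemma directly: it produces from $d$ a regular derivation $d_0 \in \Der_k(R)$. Applying Theorem \ref{t:triviality} once more, now in the direction \eqref{i1:triviality} $\Rightarrow$ \eqref{i2:triviality} and again using Remark \ref{r:char0} to allow a usual derivation, yields that $R \isom \^{\cO_{X,x}}$ admits a smooth factor. Equivalently, $\^X_x \isom \Spf R$ admits a smooth factor, which is the desired conclusion.

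There is essentially no obstacle left: all the substantial work has been done in the preceding lemma (constructing $d_0$ from $d$ via the identification $\Der_k(R,R^{n+1}) \isom \prod_{i=1}^{n+1} \Der_k(R)$ and the retraction $\^q$) and in Theorem \ref{t:triviality} (the Zariski--Lipman--Nagata type criterion). The only point worth flagging in the writeup is that one must cite Remark \ref{r:char0} twice, once to extract a usual derivation from the smooth factor on $R_n$ and once to produce a smooth factor on $R$ from a usual derivation; this is the single place where the characteristic $0$ hypothesis is actually used in this final step.
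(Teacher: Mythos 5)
Your proof is correct and follows exactly the paper's route: the paper reduces to the affine case, notes (just before the lemma) that by Theorem \ref{t:triviality} and Remark \ref{r:char0} it suffices to produce a regular derivation on $\widehat{\mathcal{O}_{X,x}}$ from one on $\widehat{\mathcal{O}_{X_n,\alpha_x}}$, and then the preceding lemma supplies exactly that. Your writeup just makes the two invocations of Theorem \ref{t:triviality}/Remark \ref{r:char0} explicit, which is fine.
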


The situation is completely different for nonconstant arcs, as they always admit a smooth factor coming from the universal arc, independent of the characteristic.

\begin{lemma}
    \label{l:nonconstant}
    Let $X$ be a scheme over $k$ and $\alpha \in X_\infty(k)$ nonconstant, that is, not in the image of $\iota_\infty \colon X \to X_\infty$. Then $\^{(X_\infty)}_\alpha$ admits a smooth factor.
\end{lemma}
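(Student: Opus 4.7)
The plan is to produce a regular (higher) derivation on $R := \^{\cO_{X_\infty,\alpha}}$ coming from the natural shift action on the arc space, and then invoke \cref{t:triviality} together with \cref{r:char0}. Reducing to the affine case $X = \Spec A$, the arc algebra $A_\infty$ carries its Hasse--Schmidt generators $x_i^{(k)}$ and the universal arc $\gamma \colon A_\infty \to A_\infty[[t]]$, $x_i \mapsto \sum_{k\geq 0} x_i^{(k)} t^k$. Write $\fp \subset A_\infty$ for the prime corresponding to $\alpha$, so that $R = \^{(A_\infty)_\fp}$, and set $\alpha^\sharp(x_i) = \sum_k a_i^{(k)} t^k$ with $a_i^{(k)} \in k$.

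The first key construction is a shift morphism $\sigma \colon A_\infty \to A_\infty[[s]]$. Applying the substitution $t \mapsto t+s$ to the universal arc yields an $A$-arc valued in $A_\infty[[s]][[t]]$, and by the universal property of $A_\infty$ it corresponds to a unique $k$-algebra map $\sigma$, which in coordinates reads
\[
   \sigma(x_i^{(k)}) \;=\; \sum_{j \geq k} \binom{j}{k}\, x_i^{(j)}\, s^{j-k}.
\]
In particular $\sigma_0 = \id$, so $\sigma$ is a higher derivation, and since $\sigma(f) \equiv f \pmod{s}$ for every $f$, one has $\sigma(\fp) \subset \fp \cdot A_\infty[[s]] + (s)$. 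Combined with the description of the quasi-adic topology in \cref{s:derivations}, this lets $\sigma$ extend continuously to $\sigma \colon R \to R[[s]]$. Taking the coefficient of $s$ yields a continuous $k$-derivation $d := \sigma_1 \in \Der_k(R)$ with $d(x_i^{(k)}) = (k+1)\, x_i^{(k+1)}$.

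It remains to exhibit some $y \in \fp$ with $d(y)$ invertible in $R$, and this is where nonconstancy of $\alpha$ enters. There exist $i_0$ and a smallest $k_0 \geq 1$ with $a_{i_0}^{(k_0)} \neq 0$; set $y := x_{i_0}^{(k_0-1)} - a_{i_0}^{(k_0-1)} \in \fp$. Then $d(y) = k_0 \cdot x_{i_0}^{(k_0)}$, whose residue modulo $\fp$ equals $k_0 \cdot a_{i_0}^{(k_0)} \neq 0$ (here we use characteristic zero). Hence $d(y)$ is a unit in $R$, so $d$ is a regular derivation, and \cref{t:triviality} together with \cref{r:char0} produces a smooth factor of $R$, i.e.\ of $\^{(X_\infty)}_\alpha$.

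The point I expect to require the most care is verifying that the algebraic shift $\sigma$ actually induces a continuous ring map on the completion $R$, and that passing to the $s$-linear coefficient is compatible with this continuity; this is not conceptually deep but must be checked carefully, since $R$ is non-Noetherian and its topology need not coincide with the $\fp$-adic one. The inclusion $\sigma(f) - f \in (s)$ is the decisive input that lets one propagate the construction through successive truncations of the quasi-adic topology on $A_\infty$ without any Noetherianity hypothesis.
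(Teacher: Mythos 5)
Your proof takes essentially the same route as the paper's. The paper's ``natural extension of $\gamma^\sharp$ to $A_\infty$'' is exactly your shift morphism $\sigma \colon A_\infty \to A_\infty[[s]]$ obtained from $t \mapsto t+s$; the paper then observes that $d_1$ applied to a suitable coordinate of $f$ (your $x_{i_0}^{(k_0-1)}$, with $f := x_{i_0}$ and $n := k_0$ in their notation) has invertible residue. Writing this in Hasse--Schmidt coordinates rather than in terms of a single $f \in \fp_{\alpha(0)}$ is a presentational choice, not a mathematical one, and your care about continuity through the quasi-adic topology is appropriate and matches what the paper needs.

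The one thing worth flagging: the lemma is stated with no characteristic hypothesis, and the paragraph preceding it explicitly claims independence of the characteristic. Your argument --- like the paper's own sketch at the step where $d_1(f^{(n-1)}) = n\, f^{(n)}$ is declared invertible --- genuinely requires $k_0$ (resp.\ $n$) to be invertible in $k$. In characteristic $p>0$ this can fail: if every $\alpha^\sharp(x_i)$ lies in $k[[t^p]]$ (e.g.\ $\alpha^\sharp(x) = t^p$ on $\A^1$), then $\sigma_1(y) \in \fp_\alpha$ for \emph{every} $y$, since $\sigma_1(x_i^{(k)})$ has residue $(k+1)\,a_i^{(k+1)}$, which vanishes identically. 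So the degree-one piece $\sigma_1$ cannot witness regularity at all; one must instead retain the full higher derivation $\sigma$ (note $\sigma_n(f^{(0)}) = f^{(n)}$ always has invertible residue), but then the definition of ``regular'' given in \cref{s:derivations} --- which asks specifically for $d_1(x)$ invertible --- does not directly apply. This is arguably an ambiguity inherited from the paper's own statement of the regularity criterion rather than a flaw you introduced, but as written your argument only proves the lemma in characteristic $0$, and you should say so explicitly.
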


\begin{proof}
    We may assume $X = \Spec A$ affine; then $X_\infty = \Spec A_\infty$. The universal arc on $X$ is given by 
    \[
        \gamma^\sharp \colon A \to A_\infty[[t]],\: x \mapsto \sum_{i\geq 0} x^{(i)} t^i.
    \]
    Let $\fp_\alpha \subset A_\infty$ be the prime defining $\alpha$. The condition that $\alpha$ is nonconstant is equivalent to the existence of $f \in \fp_{\alpha(0)}$ such that the series $\gamma^\sharp(f)$ is nonzero modulo $\fp_\alpha$, say of order $n$. Let $R_\infty$ be the completion of $A_\infty$ at $\fp_\alpha$. There exists a natural extension of $\gamma^\sharp$ to $A_\infty$, which gives rise to a continuous higher derivation $d \in \Der_k^\infty(R_\infty)$. Note that $d_1(f^{(d)})$ is invertible by construction.
\end{proof}

\begin{question}
    Let $X$ be a scheme over $k$ and $x \in X(k)$ such that $\^X_x$ does not admit a smooth factor. If $\alpha$ is not constant and centered at $x$, then \cref{l:nonconstant} establishes the existence of an isomorphism $\^X_x \isom \^Y_y \times \Spf k[[t]]$. Does $\^Y_y$ admit any smooth factors?
\end{question}

Indeed, an answer to this question would be the crucial step towards a full description of the formal neighborhood of a general degenerate arc.

We now move to several consequences of \cref{t:main-thm}. Motivated by the theorem of Drinfeld, Grinberg and Kazhdan we make the following definition.

\begin{definition}
    Let $X$ be a $k$-scheme and $x\in X(k)$. A \emph{finite formal model} for $x$ is a formal scheme of the form $\^Y_y$, where $Y$ is a scheme of finite type over $k$ and $y\in Y(k)$, such that there exists an isomorphism
    \[
        \^X_x \isom \^Y_y \times \Spf k[[z_i \mid i\in I]].
    \]
    If $\^Y_y$ does not admit a smooth factor, then $\^Y_y$ is called a \emph{minimal formal model} for $x$.
\end{definition}

If $X$ is of finite type over $k$, then any $x\in X(k)$ has a finite formal model (and hence a minimal one). Indeed, in \cite{CH22}, minimal formal models of varieties were given a geometric interpretation in terms of the respective isosingular loci. Similarly, Drinfeld's proof in \cite{Dri} gives an explicit construction of a finite formal model for nondegenerate arcs on a scheme of finite type. On the other hand, by \cref{t:cancellation} a minimal formal model is unique up to isomorphism. In particular, it follows from Cohen's structure theorem that $\^X_x$ is formally smooth if and only if its minimal formal model is given by $\Spf k$.

Another consequence of \cref{t:cancellation} is the following.

\begin{corollary}
    Let $x\in X(k)$. If $\^X_x$ does not admit a smooth factor, then there does not exist a finite formal model for $x$.
\end{corollary}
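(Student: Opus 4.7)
Plan. I would argue by contraposition, aiming to show that if $x$ admits a finite formal model then $\widehat X_x$ admits a smooth factor. Let $\widehat Y_y$ be a finite formal model for $x$, so that there is a topological isomorphism
\[
  \widehat X_x \;\isom\; \widehat Y_y \cotimes_k k[[z_i \mid i \in I]]
\]
for some index set $I$. The tool I want to invoke is the cancellation theorem (\cref{t:cancellation}), which requires both factors to have no smooth factor. So the first task is to massage the right-hand side to put $\widehat Y_y$ into this form.

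The reduction goes as follows: if $\widehat Y_y$ itself admits a smooth factor, write $\widehat Y_y \isom \cZ \cotimes_k k[[w]]$ and absorb the variable $w$ into the family $(z_i)_{i \in I}$; iterate. Because $Y$ is of finite type, $\widehat Y_y$ is Noetherian and has finite embedding dimension, so this splitting process terminates after finitely many steps. Crucially, even though the successive ``leftover'' factor $\cZ$ need not be the completion of a finite-type scheme (cf.\ the Whitney-type example in \cref{ex:whitney}), it does remain a quasi-adic ring, and this is all that \cref{t:cancellation} requires. After the reduction, we have an iso $\widehat X_x \isom \cZ \cotimes_k k[[z_j \mid j \in I']]$ where both $\widehat X_x$ (by hypothesis) and $\cZ$ admit no smooth factor.

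Now I would apply \cref{t:cancellation} to the iso $\widehat X_x \cotimes_k k[[\emptyset]] \isom \cZ \cotimes_k k[[z_j \mid j \in I']]$, obtaining $\widehat X_x \isom \cZ$. Plugging back into the decomposition yields $\cZ \isom \cZ \cotimes_k k[[z_j \mid j \in I']]$, and for any nonempty $I'$ this exhibits a smooth factor of $\cZ$, contradicting the reduction. So $I' = \emptyset$, which forces $I$ to have been empty and $\widehat Y_y$ to have admitted no smooth factor to begin with; in particular $\widehat X_x \isom \widehat Y_y$ is the completion of a Noetherian local ring essentially of finite type over $k$. The main obstacle is thus isolated in this residual $I = \emptyset$ case: the cancellation step does the real work of forcing $I = \emptyset$, but ruling out the trivial finite-type identification requires the ambient setting in which the corollary is meant to be applied — namely, the arc-space context of the preceding results, where $\widehat X_x$ is non-Noetherian and hence cannot be isomorphic to $\widehat Y_y$ for $Y$ of finite type over $k$.
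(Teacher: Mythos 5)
Your reduction to the case where $\^Y_y$ has no smooth factor (absorbing smooth factors one variable at a time into the $z_i$-family, terminating because $\^Y_y$ is Noetherian of finite embedding dimension, and noting that the leftover $\cZ$ stays quasi-adic by \cref{t:triviality}) is sound. However, the appeal to \cref{t:cancellation} in the next step is superfluous: once you have $\^X_x \isom \cZ \cotimes_k k[[z_j \mid j\in I']]$ with $\cZ$ having no smooth factor, the hypothesis that $\^X_x$ has no smooth factor already forces $I'=\emptyset$ directly --- picking any single $j_0\in I'$ would exhibit $\^X_x \isom (\cZ\cotimes_k k[[z_j\mid j\neq j_0]])\cotimes_k k[[z_{j_0}]]$ as a smooth factor decomposition. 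Your roundabout route (cancel to get $\^X_x\isom\cZ$, then feed back in to get $\cZ\isom\cZ\cotimes_k k[[z_j]]$) reaches the same contradiction, but cancellation buys you nothing extra here; the paper's advertisement of the corollary as a consequence of \cref{t:cancellation} is somewhat misleading on this point, since the only place cancellation is genuinely needed in this circle of ideas is for uniqueness of the minimal formal model.

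The substantive point you raise at the end is correct and worth emphasizing: after forcing $I'=\emptyset$ one is left with $\^X_x\isom\^Y_y$ with $Y$ of finite type, and nothing in the stated hypotheses rules this out. As written, the corollary is literally false if applied to a finite-type $X$ whose formal neighborhood at $x$ has no smooth factor (then $X$ itself is a finite formal model, with $I=\emptyset$). The statement is only meaningful under the implicit standing assumption that $\^X_x$ is not the completion of a local ring essentially of finite type over $k$, which is exactly what holds in the intended application (\cref{p:not-dgk}) where $\^X_x=\^{(Y_\infty)}_{\beta_y}$ at a degenerate arc, a ring of infinite embedding codimension by \cite{CdFD22}. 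The paper provides no proof of this corollary, so there is no authorial argument to compare against; your identification of the residual $I=\emptyset$ case and its resolution via the arc-space context is the right reading of what the authors must have had in mind.
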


Putting these together, we obtain the following  description of the formal neighborhoods of arcs \emph{contained in their respective isosingular locus}.

\begin{proposition}
    \label{p:not-dgk}
    Let $X$ be a scheme of finite type over a field of characteristic $0$ and $x \in X(k)$. Let $\^Y_y$ be a minimal formal model of $\^X_x$ and $\alpha \in X_\infty(k)$ centered at $x$ such that the image of $\alpha$ in $Y_\infty$ equals the constant arc $\beta_y$. Then
    \[
        \^{(X_\infty)}_{\alpha} \isom \^{(Y_\infty)}_{\beta_y} \times \Spf k[[z_i \mid i\in I]]
    \]
    and $\^{(Y_\infty)}_{\beta_y}$ does not admit a smooth factor. In particular, if $X$ is not smooth at $x$, then there does not exist a finite formal model for $\^{(X_\infty)}_{\alpha}$.
\end{proposition}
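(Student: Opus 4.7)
The plan is to transfer $\alpha$ through the product decomposition $\^X_x \isom \^Y_y \cotimes k[[z_i \mid i\in I]]$ furnished by the minimal formal model, use that arc spaces commute with products of $k$-schemes, and then invoke \cref{t:main-thm} together with \cref{t:cancellation}.

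First I would realize the decomposition geometrically by setting $Z := Y \times_k \Spec k[z_i \mid i\in I]$ and $z := (y,0) \in Z(k)$, so that $\^Z_z \isom \^Y_y \cotimes k[[z_i]] \isom \^X_x$. Applying \cref{l:isom-formal-nbhd} transfers $\alpha$ to an arc $\gamma \in Z_\infty(k)$ centered at $z$, with $\^{(X_\infty)}_\alpha \isom \^{(Z_\infty)}_\gamma$. Since arc spaces commute with fiber products we have $Z_\infty \isom Y_\infty \times_k (\A^I)_\infty$, and $\gamma$ splits as a pair $(\beta_y, \delta)$ whose first component is constant by hypothesis and whose second component is some $\delta \in (\A^I)_\infty(k)$ centered at the origin. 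Because $(\A^I)_\infty$ is a polynomial algebra in $I\times \N$ variables, its formal neighborhood at $\delta$ is $\Spf k[[w_j \mid j \in I\times\N]]$, and unpacking the factors gives the first claim. The second claim is then immediate from \cref{t:main-thm} applied to $(Y,y)$ with $n=\infty$: since $\^Y_y$ is a minimal formal model it admits no smooth factor, hence neither does $\^{(Y_\infty)}_{\beta_y}$.

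For the final assertion I would argue by contradiction: suppose $X$ is not smooth at $x$ yet $\alpha$ admits a finite formal model $\^W_w$. After replacing $\^W_w$ by its own minimal formal model (which exists since $W$ is of finite type, by \cite{CH22}), I may assume $\^W_w$ admits no smooth factor. Comparing this finite-model decomposition of $\^{(X_\infty)}_\alpha$ with the one from the first claim and applying \cref{t:cancellation} yields $\^{(Y_\infty)}_{\beta_y} \isom \^W_w$. Because $X$ is not smooth at $x$ and $k$ has characteristic $0$, the minimal model $\^Y_y$ cannot be $\Spf k$, forcing $T_y Y \neq 0$. A standard deformation computation identifies $T_{\beta_y}(Y_\infty)$ with $T_y Y \otimes_k k[[t]]$, which is infinite-dimensional over $k$, so the Zariski cotangent space $\fm/\fm^2$ of $\^{(Y_\infty)}_{\beta_y}$ is infinite-dimensional. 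On the other hand $\^W_w$, being the completion of a finite-type Noetherian local ring, has finite-dimensional cotangent space by Cohen's structure theorem. This is the desired contradiction.

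The main obstacle I expect is the first step: realizing the abstract decomposition of formal neighborhoods as $\^Z_z$ for an honest $k$-scheme $Z$ (which is non-Noetherian when $|I|$ is infinite) and correctly tracking the two components of $\gamma$ under $Z_\infty \isom Y_\infty \times_k (\A^I)_\infty$ so as to verify that the first component is indeed $\beta_y$. Once the geometric setup is in place, the remaining steps—invoking \cref{t:main-thm} and \cref{t:cancellation} and running the tangent-space calculation—are essentially routine.
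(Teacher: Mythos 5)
Your proof is correct and takes the route the paper evidently intends (the paper provides no explicit argument, only the remark ``Putting these together''): realize the minimal-model decomposition as $Z = Y \times \A^I$, transfer $\alpha$ via \cref{l:isom-formal-nbhd}, use that arc spaces commute with products to split $\gamma=(\beta_y,\delta)$, and apply \cref{t:main-thm} to the $Y$-factor. Note that the paper's ``easy implication'' Lemma only treats the constant arc $\alpha_x$, so your detour through $Z_\infty \isom Y_\infty \times (\A^I)_\infty$ is genuinely needed to handle a general $\alpha$ projecting to $\beta_y$. Your cotangent-space step, identifying $T_{\beta_y}(Y_\infty)$ with $T_yY\otimes_k k[[t]]$ and observing it is infinite-dimensional once $\^Y_y\neq \Spf k$, is a worthwhile explicit addition: it rules out the borderline possibility that $\^{(Y_\infty)}_{\beta_y}$ could itself be Noetherian (and so be its own finite formal model with $I=\emptyset$), a case the paper's preceding Corollary as literally stated does not obviously exclude. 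With that in hand, invoking \cref{t:cancellation} closes the argument as you describe.
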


Finally we want to mention in \cref{p:comparison} a result comparing the formal neighborhood of an arbitrary arc $\alpha$ to that of the constant arc centered at $\alpha(0)$. It makes precise the idea that the arc space has its worst singularities at constant arcs centered in the singular locus.

For convenience's sake we introduce the following notion. A morphism $f \colon \cX \to \cY$ of formal $k$-schemes is called \emph{efficient} if for all $(A,\fm_A)$, with $A$ a $k$-algebra and $\fm_A \subset A$ some (not necessarily maximal) ideal with $\fm_A^2 = 0$, we have that the map on $A$-points $f_A \colon \cX(A) \to \cY(A)$ is a bijection. Note that here we consider $A$ as a topological ring with the discrete topology (which equals the $\fm_A$-adic one). We note that this definition is compatible with \cite[Definition 3.6]{CdFD22}:

\begin{lemma}
    Let $f \colon \Spf S \to \Spf R$ with $(R,\fm)$, $(S,\fn)$ completions of local $k$-algebras satisfying $R/\fm = S/\fn = k$. Then $f$ is efficient if and only if $f^\sharp \colon R \to S$ is surjective and induces an isomorphism of continuous cotangent spaces, that is,
    \[
        \fm/\^{\fm^2} \isom \fn/\^{\fn^2}.
    \]
\end{lemma}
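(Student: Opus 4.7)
The strategy is to identify the $A$-points of $\cY=\Spf R$ and $\cX=\Spf S$ explicitly. For any test pair $(A,\fm_A)$ with $\fm_A^2=0$, an $A$-point of $\cY$ is a continuous local $k$-algebra homomorphism $\psi\colon R\to A$ with $\psi(\fm)\subset\fm_A$. Because $\fm_A^2=0$ and $A$ is discrete, continuity of $\psi$ forces $\psi(\^{\fm^2})=0$, so $\psi$ is uniquely determined by the induced $k$-linear map $\fm/\^{\fm^2}\to\fm_A$. This gives natural identifications $\cY(A)\cong\Hom_k(\fm/\^{\fm^2},\fm_A)$ and $\cX(A)\cong\Hom_k(\fn/\^{\fn^2},\fm_A)$, under which $f_A$ becomes pre-composition with the induced map $\bar f^{\sharp}\colon\fm/\^{\fm^2}\to\fn/\^{\fn^2}$.

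For the direction $(\Leftarrow)$, assume $f^\sharp$ is surjective and $\bar f^\sharp$ is an isomorphism. Injectivity of $f_A$ is immediate: if $\varphi,\varphi'\in\cX(A)$ satisfy $\varphi\circ f^\sharp=\varphi'\circ f^\sharp$, they agree on $f^\sharp(R)=S$. For surjectivity of $f_A$, given $\psi\in\cY(A)$, set $K=\ker f^\sharp\subset\fm$. Injectivity of $\bar f^\sharp$ forces $K\subset\^{\fm^2}$; combined with the continuity of $\psi$ and the inclusion $\psi(\fm^2)\subset\fm_A^2=0$, we obtain $\psi(\^{\fm^2})=0$, hence $\psi(K)=0$ and $\psi$ factors through $R/K\cong S$ to give the required lift.

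For the direction $(\Rightarrow)$, assume $f_A$ is bijective for every such test pair. Taking $A=k\oplus V$ with $\fm_A=V$ for an arbitrary $k$-vector space $V$ and applying the Yoneda lemma shows that $\bar f^\sharp$ is a $k$-linear isomorphism. To extract surjectivity of $f^\sharp$, set $M:=S/(f^\sharp(R)+\^{\fn^2})$ and consider the test pair $A=k\oplus M$ with $\fm_A=M$. The zero extension $\varphi_0\colon s\mapsto(\bar s,0)$ and the tautological extension $\varphi_1\colon s\mapsto(\bar s,[s])$ both define elements of $\cX(A)$ --- multiplicativity of $\varphi_1$ uses that $k\subset f^\sharp(R)$ and $\fn^2\subset \^{\fn^2}$ --- and both satisfy $\varphi_i\circ f^\sharp\colon r\mapsto(\bar r,0)$ since $[f^\sharp(r)]=0$ in $M$. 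Injectivity of $f_A$ forces $\varphi_0=\varphi_1$, whence $M=0$ and $S=f^\sharp(R)+\^{\fn^2}$.

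The main obstacle is then upgrading this first-order surjectivity to the full statement $f^\sharp(R)=S$. I would iterate the relation $\fn=f^\sharp(\fm)+\^{\fn^2}$ by expanding products $a_1a_2\in\fn^2$ as $(f^\sharp(b_1)+c_1)(f^\sharp(b_2)+c_2)$ with $b_i\in\fm$ and $c_i\in\^{\fn^2}$, and conclude inductively that $S=f^\sharp(R)+\^{\fn^n}$ for every $n\geq 2$. Combined with the separatedness $\bigcap_n\^{\fn^n}=0$ of the quasi-adic topology this yields density of $f^\sharp(R)$ in $S$. Passing from dense to closed image is the technical crux in the non-Noetherian (quasi-adic) setting, and should follow from a completeness/approximation argument using that $R$ is itself quasi-adic; this is the step I expect to require the most care.
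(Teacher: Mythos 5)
Your reduction of $A$-points to $k$-linear maps out of the continuous cotangent space is correct, and both the injectivity argument for $(\Leftarrow)$ and the Yoneda/square-zero-extension argument establishing first-order surjectivity in $(\Rightarrow)$ are sound. However, there is a genuine gap in $(\Rightarrow)$, and you yourself flag it: you only reach $S = f^\sharp(R) + \^{\fn^2}$, sketch an iteration giving $S = f^\sharp(R) + \^{\fn^n}$ for all $n$, and then say that passing from a dense image to actual surjectivity "should follow from a completeness/approximation argument." That last step is not optional hand-waving --- it is precisely where the content lies, since efficiency only ever sees square-zero data and the jump from first-order to full surjectivity is exactly what needs proving. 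The way to close it is a Mittag--Leffler argument: since $R$ and $S$ are complete for their quasi-adic topologies, one has $R = \varprojlim R/\^{\fm^n}$ and $S = \varprojlim S/\^{\fn^n}$, the relation $S = f^\sharp(R) + \^{\fn^n}$ says each induced map $R/\^{\fm^n} \to S/\^{\fn^n}$ is surjective, and the transition maps on the kernels $(\ker f^\sharp + \^{\fm^{n+1}})/\^{\fm^{n+1}} \to (\ker f^\sharp + \^{\fm^n})/\^{\fm^n}$ are visibly surjective, so $\varprojlim^1$ vanishes and $f^\sharp = \varprojlim f^\sharp_n$ is surjective. Without this (or an equivalent) argument spelled out, the proof is incomplete.

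A smaller point you gloss over in $(\Leftarrow)$: after factoring $\psi$ through $S = R/K$, you must check that the resulting $\phi \colon S \to A$ is \emph{continuous}, otherwise it is not an $A$-point of $\Spf S$. This does hold, but requires a short argument: using surjectivity of $f^\sharp$ and injectivity of $\bar f^\sharp$ one shows $f^\sharp(\^{\fm^2}) = \^{\fn^2}$, whence $\ker\phi = f^\sharp(\ker\psi) \supseteq f^\sharp(\^{\fm^2}) = \^{\fn^2}$ is open. Including this would make the easy direction airtight.
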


The proof is straightforward.

\begin{lemma}
    \label{l:efficient}
    Let $f \colon X \to Y$ a morphism of $k$-schemes and let $x \in X(k)$, $y = f(x) \in Y(k)$. Let $\alpha \in X_\infty(k)$ with $\alpha(0) = x$, set $\beta = f_\infty(\alpha)$ and as before denote by $\alpha_x \in X_\infty$, $\beta_y \in Y_\infty$ the constant arcs centered at $x$ and $y$.
    \begin{enumerate}
        \item If $\^f \colon \^X_x \to \^Y_y$ is a closed immersion, then so is $\^{(X_\infty)}_\alpha \to \^{(Y_\infty)}_\beta$.
        \item If $\^f \colon \^X_x \to \^Y_y$ is efficient, then so is $\^{(X_\infty)}_{\alpha_x} \to \^{(Y_\infty)}_{\beta_y}$.
    \end{enumerate}
\end{lemma}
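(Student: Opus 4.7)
The proof of both parts proceeds via the functor of points on test rings $(A,\fm_A)$. Recall from \cref{s:jets-arcs} that an $A$-point of $\^{(X_\infty)}_\alpha$ corresponds to a continuous ring map $\~\alpha^\sharp\colon \^{\cO_{X,x}} \to A[[t]]$ reducing to $\alpha^\sharp$ modulo $\fm_A$, with an analogous description for $\^{(Y_\infty)}_\beta$; the induced morphism between them is precomposition with $\^f^\sharp\colon \^{\cO_{Y,y}} \to \^{\cO_{X,x}}$. Analogous statements hold for jet spaces with $A[t]/(t^{n+1})$ replacing $A[[t]]$.

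For (1), a closed immersion $\^f$ means $\^f^\sharp$ is a continuous surjection. Precomposition with a surjection is injective on any $\Hom$-set, and its image consists exactly of those $\~\beta^\sharp$ that vanish on $\ker(\^f^\sharp)$. The verification that this closed subfunctor of $\^{(Y_\infty)}_\beta$ is represented by $\^{(X_\infty)}_\alpha$ reduces to compatibility of the jet/arc construction with surjective quotients: if $B \twoheadrightarrow A$ is surjective, then so is the induced map $B_n \to A_n$ (since $A_n$ is generated as a $k$-algebra by the elements $a^{(i)}$, each of which lifts to $b^{(i)}$ for any preimage $b \mapsto a$), and surjectivity is preserved under localization at the corresponding primes and under completion.

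For (2), I apply the characterization of efficient morphisms from the preceding lemma: $\^f$ is efficient iff $\^f^\sharp$ is surjective and induces an isomorphism on continuous cotangent spaces $\fn/\^{\fn^2} \simeq \fm/\^{\fm^2}$. Surjectivity at the jet/arc level follows from (1). For the cotangent iso at the constant arcs, the key input is the identification
\[
    \Der_k(A_n, M) \simeq \Der_k(A, M \otimes_{A_n} B_n),
\]
with $B_n = A_n[t]/(t^{n+1})$, already used in \cref{s:formal-nbhd-arc}. Specializing to $M = k$ (the residue field at $\alpha_x$) and noting that for a constant arc the composition $A \to A_n \to B_n \to k[t]/(t^{n+1})$ is the constant inclusion through the residue map yields
\[
    T_{\alpha_x}(X_n) \simeq \Hom_k(\fm_x/\fm_x^2, k[t]/(t^{n+1})),
\]
and similarly for $Y_n$ at $\beta_y$. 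The efficient cotangent iso at $x,y$ then induces an iso of these $\Hom$-spaces by functoriality; dualizing back to continuous cotangent spaces gives the required iso at $\alpha_x, \beta_y$.

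The main obstacle lies in the non-Noetherian topological subtleties: the continuous-$\Hom$ computations and the passage between tangent and continuous cotangent spaces must be carried out carefully, and the product-type structure of $T_{\alpha_x}(X_n)$ that underlies the dualization is available only for constant arcs, where the section $\iota_n$ provides a retraction of $X_n$ onto $X$. This is precisely the reason part (2) is restricted to constant arcs, whereas part (1) holds for any $\alpha$ with $\alpha(0) = x$.
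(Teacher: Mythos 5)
Your proof is correct in outline, but it follows a substantially different — and more indirect — route than the paper's own argument for part (2), and it leaves the most delicate step unjustified.

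The paper does not pass through the cotangent-space characterization of efficiency at all. Instead it uses the functor-of-points definition directly. The key observation is that for any $k$-algebra $A$ with ideal $\fm_A$ satisfying $\fm_A^2=0$, an $A$-point of $\^{(X_\infty)}_{\alpha_x}$ is a morphism $\Spec A[[t]]\to X$ reducing modulo $\fm_A$ to the constant arc $\alpha_x$; since $\alpha_x$ is constant, this morphism factors through $\^X_x$ and is nothing but an $(A[[t]],\fm_A[[t]])$-point of $\^X_x$ in the sense of the efficiency definition. Efficiency of $\^f$ applied to the (non-local!) test pair $(A[[t]],\fm_A[[t]])$ then gives bijectivity on $A$-points of the arc-space neighborhoods in one line. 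Note this is precisely why the definition of efficiency in the paper allows a \emph{not necessarily maximal} ideal $\fm_A$: the argument needs $A[[t]]/\fm_A[[t]]\cong k[[t]]\neq k$. Your route misses this clean reduction.

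Your alternative — checking surjectivity via part (1) and then establishing the cotangent isomorphism by a tangent-space computation — can be made to work, but it is not complete as written. You compute the tangent space $T_{\alpha_x}(X_n)\cong\Hom_k(\fm_x/\^{\fm_x^2},k[t]/(t^{n+1}))$ (you should write $\^{\fm_x^2}$, not $\fm_x^2$, since the lemma is stated without a finite-type hypothesis) and deduce an isomorphism of tangent spaces. But the characterization lemma requires an isomorphism of \emph{continuous cotangent spaces}, and you dismiss the reverse dualization as ``must be carried out carefully.'' This is the genuine content: you need that the continuous cotangent space of a quasi-adic ring is discrete (so that continuous functionals separate points), which then lets you argue that a surjective linear map whose dual is an isomorphism is itself an isomorphism. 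That step is true and not long, but it is exactly what the paper's approach avoids having to prove. Finally, in part (1) you implicitly need to first replace $X,Y$ by $\Spec\^{\cO_{X,x}}$, $\Spec\^{\cO_{Y,y}}$ using \cref{l:isom-formal-nbhd} before the argument ``$B\twoheadrightarrow A$ implies $B_n\twoheadrightarrow A_n$'' applies, since the hypothesis is surjectivity of the completed map, not of $\cO_{Y,y}\to\cO_{X,x}$; you should say so explicitly.
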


\begin{proof}
    The first assertion is immediate, so let us assume that $\^f$ is efficient. Let $(A,\fm_A)$ be a $k$-algebra with $\fm_A^2 = 0$. Then any $A$-valued point $\~\alpha$ of $\^{(X_\infty)}_{\alpha_x}$ corresponds to a morphism
    \[
        \~\alpha \colon \Spec A[[t]] \to X
    \]
    which modulo $\fm_A$ factors through $\alpha_x \colon \Spec k[[t]] \to X$. Hence $\~\alpha \in \^X_x(A[[t]])$, where we consider $(A[[t]],\fm_A[[t]])$ with the discrete topology. This proves the claim.
\end{proof}

The aforementioned comparison result now follows:

\begin{theorem}
    \label{p:comparison}
    Let $X$ be a scheme of finite type over a field $k$ of characteristic $0$. Let $\alpha \in X_\infty(k)$ and $x = \alpha(0)$. Let $\^Y_y$ be a minimal formal model for $\^X_x$. Then there exist index sets $J \subset I$ and a commutative diagram of formal schemes
    \[
        \xymatrix{\Spf k[[z_i \mid i\in I]]  \ar[rd]^-{p_0} & \^{(X_\infty)}_\alpha \ar[d]^-{p_1} \ar[l]_-{i_1} \\
        \^{(X_\infty)}_{\alpha_x} \ar[u]_-{i_2} \ar[r]_-{p_2} & \Spf k[[z_j \mid j \in J]],}
    \]
    such that $i_1$, $i_2$ are closed immersions and $p_0$, $p_1$, $p_2$ are natural projections induced from a product decomposition. Moreover:
    \begin{enumerate}
        \item $i_2$ is efficient, and
        \item $p_2$ equals $\pr_2 \colon \^{(Y_\infty)}_{\beta_y} \times \Spf k[[z_j \mid j \in J]] \to \Spf k[[z_j \mid j\in J]]$ and $\^{(Y_\infty)}_{\beta_y}$ does not admit a smooth factor.
    \end{enumerate}
\end{theorem}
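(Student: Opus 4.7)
The plan is to produce the diagram by combining the minimal formal model decomposition of $\^X_x$ with the arc-space functoriality of \cref{l:isom-formal-nbhd}, exhibiting $\^{(X_\infty)}_{\alpha_x}$ and $\^{(X_\infty)}_\alpha$ as completed tensor products sharing a common smooth factor. The main obstacle will be a cardinality comparison of continuous embedding dimensions.

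By definition of minimal formal model we write $\^X_x \isom \^Y_y \cotimes_k k[[w_{j'} \mid j' \in J']]$, so $\^X_x$ is identified with the formal neighborhood of $(y,0) \in W := Y \times \mathbb{A}^{J'}$, where $\mathbb{A}^{J'} := \Spec k[w_{j'} \mid j' \in J']$. Applying \cref{l:isom-formal-nbhd} to $W$, together with the fact that arc spaces commute with products, yields
\begin{align*}
    \^{(X_\infty)}_{\alpha_x} &\isom \^{(Y_\infty)}_{\beta_y} \cotimes_k \^{((\mathbb{A}^{J'})_\infty)}_{0}, \\
    \^{(X_\infty)}_{\alpha} &\isom \^{(Y_\infty)}_{\alpha_Y} \cotimes_k \^{((\mathbb{A}^{J'})_\infty)}_{\alpha_{J'}},
\end{align*}
where $(\alpha_Y, \alpha_{J'})$ is the image of $\alpha$ in $W_\infty = Y_\infty \times (\mathbb{A}^{J'})_\infty$. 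Since $\mathbb{A}^{J'}$ is smooth, so is its arc space, and hence both $\^{((\mathbb{A}^{J'})_\infty)}_{0}$ and $\^{((\mathbb{A}^{J'})_\infty)}_{\alpha_{J'}}$ are isomorphic to $\Spf k[[z_j \mid j \in J]]$ with $J := J' \times \N$. The projections $p_1, p_2$ are the resulting projections onto this common smooth factor.

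To construct $i_1, i_2$, set $I := J \sqcup I'$ with $|I'|$ equal to the continuous embedding dimension of $\^{(Y_\infty)}_{\beta_y}$, and let $p_0$ be the natural projection coming from $k[[z_i \mid i \in I]] \isom k[[z_{i'} \mid i' \in I']] \cotimes_k k[[z_j \mid j \in J]]$. Choose topological generators of the respective maximal ideals to produce continuous surjections $k_2^\sharp \colon k[[z_{i'} \mid i' \in I']] \surj \^{(Y_\infty)}_{\beta_y}$ (efficient by matching cardinality of continuous cotangent spaces) and $k_1^\sharp \colon k[[z_{i'} \mid i' \in I']] \surj \^{(Y_\infty)}_{\alpha_Y}$ (any closed immersion). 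Setting $i_1 := k_1 \cotimes_k \id$ and $i_2 := k_2 \cotimes_k \id$ makes the diagram commute by construction. The existence of $k_1$ with this fixed $I'$ is the main obstacle: it requires the continuous embedding dimension of $\^{(Y_\infty)}_{\alpha_Y}$ to be at most that of $\^{(Y_\infty)}_{\beta_y}$. This holds because $Y_\infty$ is an inverse limit of the Noetherian finite-type jet schemes $Y_n$, so both continuous embedding dimensions are bounded by $\aleph_0$; moreover, in the only nontrivial case when $\^Y_y$ has positive dimension, the equality $\embdim \^{(Y_n)}_{\iota_n(y)} = (n+1) \cdot \embdim \^Y_y$ forces both to be exactly $\aleph_0$, while otherwise $\alpha_Y = \beta_y$ is forced and the inequality is trivial.

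Finally we verify the remaining conditions. Condition (1), that $i_2$ is efficient, is immediate from its construction as $k_2 \cotimes_k \id$ of an efficient morphism with the identity, which still induces an isomorphism on continuous cotangent spaces. Condition (2), that $\^{(Y_\infty)}_{\beta_y}$ admits no smooth factor, is precisely \cref{t:main-thm} applied to the minimal formal model $\^Y_y$, which by definition admits no smooth factor.
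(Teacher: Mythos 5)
Your proof is essentially correct and reaches the same diagram, but it takes a genuinely different route in one key step. The paper first fixes an \emph{efficient} closed embedding at the finite level: after reducing to $X = Y\times\A^d$ via \cref{l:isom-formal-nbhd}, it invokes \cite[Theorem 3.15]{CdFD22} to find $Y\subset\A^n$ with $n=\embdim(\cO_{Y,y})$, so that $\^X_x\hookrightarrow\^{(\A^{n+d})}$ is efficient, and then lets \cref{l:efficient} do all the work: both $i_1$ and $i_2$ (and the efficiency of $i_2$) come for free from the induced maps on arc spaces, since the arc space of the smooth ambient $\A^{n+d}$ is a single formal power series ring at every $k$-arc. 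You instead build the embeddings $k_1,k_2$ directly at the level of $Y_\infty$ by choosing topological generators, which pushes the difficulty into a cardinality comparison of continuous embedding dimensions. That comparison can be made to work, but your write-up leaves several nontrivial facts unproven: (i) the identity $\embdim\^{(Y_n)}_{\iota_n(y)}=(n+1)\cdot\embdim\^Y_y$ (true, but worth a word: the Hasse--Schmidt derivative of a relation in $\fm^2$ again lies in $\fm^2$); (ii) the passage from this to $\aleph_0$ for the \emph{continuous} cotangent space of the arc space, which requires knowing that the transition maps on cotangent spaces of jet schemes are injective and that nothing collapses after completion; (iii) the statement ``otherwise $\alpha_Y=\beta_y$ is forced'' needs the observation that any arc through an Artinian local point is constant (since $K[[t]]$ is a domain). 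The paper's route via \cref{l:efficient} sidesteps all three issues, at the cost of using the embedding-dimension result from \cite{CdFD22}; the paper's footnote even notes that one can dispense with that reference by passing to the arc space of the formal scheme, which is perhaps the cleanest way to repair the gaps in your sketch if you wish to avoid it too.

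Two smaller points: your phrase ``efficient by matching cardinality of continuous cotangent spaces'' should be ``efficient because the chosen lifts induce an isomorphism on continuous cotangent spaces''; matching cardinality is necessary but not by itself sufficient. And when you write ``forces both to be exactly $\aleph_0$'' you are asserting more than you use --- you only need $\embdim\^{(Y_\infty)}_{\beta_y}=\aleph_0$ together with the trivial upper bound on $\embdim\^{(Y_\infty)}_{\alpha_Y}$.
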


\begin{remark}
    If $x \in X_\sm$, then $I = J$, $\^Y_y = \Spf k$ and $i_1, i_2, p_0, p_1, p_2$ can all be chosen to be isomorphisms.
\end{remark}

\begin{remark}
    \label{r:comparison}
    Let $\^{(X_\infty)}_\alpha \isom \^Z_z \times \Spf k[[z_i \mid i\in J]]$. Then \cref{p:comparison} guarantees the existence of closed immersions
    \[
        j_1 \colon \^Z_z \to \Spf k[[z_i \mid i\in I \setminus J]], \: j_2 \colon \^{(Y_\infty)}_{\beta_y} \to \Spf k[[z_i \mid i\in I \setminus J]]
    \]
    with $j_2$ being efficient.
\end{remark}

\begin{proof}[Proof of \cref{p:comparison}]
    By \cref{l:isom-formal-nbhd} we may assume $X = Y \times \A^d$, $x = (y,0)$ and by\footnote{One may avoid the use of \cite[Theorem 3.15]{CdFD22} (and so the assumption that $k$ is infinite) by considering the arc space of the formal scheme $\^X_x$ instead.} \cite[Theorem 3.15]{CdFD22} we have $Y \subset \A^n$ with $n = \embdim (\cO_{Y,y})$. Note that $\^f \colon \^{(\A^{n+d})} \to \^X_x$ is efficient and, since $\A^{n+d}$ is smooth, any two $k$-arcs on it have isomorphic formal neighborhoods. Set $J := 0 \times \Z_{\geq0}^d \subset \Z_{\geq0}^{n+d} =: I$ and consider all maps induced by $\^f$. The assertion then follows from \cref{t:main-thm,l:efficient}.
\end{proof}

\begin{remark}
    Compare \cref{p:comparison} to a similar result from \cite{CdFD22}. There it was proven that, for any arc $\alpha$ on a variety $X$ over a perfect field $k$, the condition $\alpha \notin (X \setminus X_\sm)_\infty$ is equivalent to
    \[
        \embcodim (\cO_{X_\infty,\alpha}) < \infty.
    \]
    See \cite[Definition 6.1]{CdFD22} for the definition of $\embcodim(R)$ for non-Noetherian local rings $(R,\fm)$. In particular, if $\alpha_x$ denotes the constant arc centered at $x \in X \setminus X_\sm(k)$, then the embedding codimension of $\alpha_x$ is infinite. On the other hand, for any nondegenerate $\alpha \in X_\infty(k)$ centered at $x$ we have
    \[
        \embcodim (\cO_{X_\infty,\alpha}) < \infty = \embcodim (\cO_{X_\infty,\alpha_x}).
    \]
    As mentioned in \cref{r:comparison}, \cref{p:comparison} implies that, for any $\alpha \in X_\infty(k)$ centered at $x$, we have
    \[
        \embdim (\cO_{X_\infty,\alpha}) \leq \embdim (\cO_{X_\infty,\alpha_x}).
    \]
    However, there is no similar bound for the embedding codimensions that follows as an immediate consequence of \cref{p:comparison}. For example, there exists a principal ideal $(f) \subset k[[x_i \mid i \in \N]]$ such that the quotient $A$ (which obviously satisfies $\embcodim(A) = 1$) admits no smooth factors, see \cite[Remark 5.5]{CdFD22}.
\end{remark}

\printbibliography[
heading=bibintoc,
title={References}
]

@incollection {Whi65,
    AUTHOR = {Whitney, Hassler},
     TITLE = {Local properties of analytic varieties},
 BOOKTITLE = {Differential and {C}ombinatorial {T}opology ({A} {S}ymposium
              in {H}onor of {M}arston {M}orse)},
     PAGES = {205--244},
 PUBLISHER = {Princeton Univ. Press, Princeton, N. J.},
      YEAR = {1965},
   MRCLASS = {32.44},
  MRNUMBER = {0188486},
MRREVIEWER = {L. Bungart},
}

@article {BS19,
    AUTHOR = {Bourqui, David and Sebag, Julien},
     TITLE = {Cancellation and regular derivations},
   JOURNAL = {J. Algebra Appl.},
  FJOURNAL = {Journal of Algebra and its Applications},
    VOLUME = {18},
      YEAR = {2019},
    NUMBER = {9},
      ISSN = {0219-4988},
   MRCLASS = {13J05 (13F25 13J10 13N15)},
  MRNUMBER = {3981698},
MRREVIEWER = {Alexander B. Levin},
       DOI = {10.1142/S0219498819501652},
       URL = {https://doi.org/10.1142/S0219498819501652},
}

@article{CdFD22,
title={Embedding codimension of the space of arcs}, 
volume={10}, 
DOI={10.1017/fmp.2021.19}, 
journal={Forum Math. Pi},
fjournal={Forum of Mathematics, Pi}, 
publisher={Cambridge University Press}, 
author={Chiu, Christopher and {de Fernex}, Tommaso and Docampo, Roi}, 
year={2022}, 
}

@article {Ish07,
    AUTHOR = {Ishii, Shihoko},
     TITLE = {Jet schemes, arc spaces and the {N}ash problem},
   JOURNAL = {C. R. Math. Acad. Sci. Soc. R. Can.},
  FJOURNAL = {Comptes Rendus Math\'{e}matiques de l'Acad\'{e}mie des
              Sciences. La Soci\'{e}t\'{e} Royale du Canada. Mathematical
              Reports of the Academy of Science. The Royal Society of
              Canada},
    VOLUME = {29},
      YEAR = {2007},
    NUMBER = {1},
     PAGES = {1--21},
      ISSN = {0706-1994,2816-5810},
   MRCLASS = {14D15 (14D20 14J17)},
  MRNUMBER = {2354631},
MRREVIEWER = {Tommaso\ De Fernex},
}

@incollection {dF18,
    AUTHOR = {de Fernex, Tommaso},
     TITLE = {The space of arcs of an algebraic variety},
 BOOKTITLE = {Algebraic geometry: {S}alt {L}ake {C}ity 2015},
    SERIES = {Proc. Sympos. Pure Math.},
    VOLUME = {97.1},
     PAGES = {169--197},
 PUBLISHER = {Amer. Math. Soc., Providence, RI},
      YEAR = {2018},
      ISBN = {978-1-4704-3577-6},
   MRCLASS = {14E18 (14J17)},
  MRNUMBER = {3821149},
MRREVIEWER = {Lee\ McEwan},
       DOI = {10.1090/pspum/097.1/06},
       URL = {https://doi.org/10.1090/pspum/097.1/06},
}

@article {dFD20,
    AUTHOR = {{de Fernex}, Tommaso and Docampo, Roi},
     TITLE = {Differentials on the arc space},
   JOURNAL = {Duke Math. J.},
  FJOURNAL = {Duke Mathematical Journal},
    VOLUME = {169},
      YEAR = {2020},
    NUMBER = {2},
     PAGES = {353--396},
      ISSN = {0012-7094},
   MRCLASS = {14E18 (13N05)},
  MRNUMBER = {4057146},
       DOI = {10.1215/00127094-2019-0043},
       URL = {https://doi.org/10.1215/00127094-2019-0043},
}

@article {ega-i,
    AUTHOR = {Grothendieck, A.},
     TITLE = {\'{E}l\'{e}ments de g\'{e}om\'{e}trie alg\'{e}brique. {I}. {L}e langage des
              sch\'{e}mas},
   JOURNAL = {Inst. Hautes \'{E}tudes Sci. Publ. Math.},
  FJOURNAL = {Institut des Hautes \'{E}tudes Scientifiques. Publications
              Math\'{e}matiques},
    NUMBER = {4},
      YEAR = {1960},
     PAGES = {228},
      ISSN = {0073-8301},
   MRCLASS = {14.55},
  MRNUMBER = {217083},
       URL = {http://www.numdam.org/item?id=PMIHES_1960__4__228_0},
}

@incollection {Voj07,
    AUTHOR = {Vojta, Paul},
     TITLE = {Jets via {H}asse-{S}chmidt derivations},
 BOOKTITLE = {Diophantine geometry},
    SERIES = {CRM Series},
    VOLUME = {4},
     PAGES = {335--361},
 PUBLISHER = {Ed. Norm., Pisa},
      YEAR = {2007},
   MRCLASS = {13N15},
  MRNUMBER = {2349665},
MRREVIEWER = {Andrey Yu. Lazarev},
}

@article{CH22,
title = {Isosingular loci of algebraic varieties},
journal = {J. Pure Appl. Algebra},
fjournal = {Journal of Pure and Applied Algebra},
volume = {226},
number = {12},
pages = {107--131},
year = {2022},
issn = {0022-4049},
doi = {https://doi.org/10.1016/j.jpaa.2022.107131},
url = {https://www.sciencedirect.com/science/article/pii/S002240492200127X},
author = {Chiu, Christopher and Hauser, Herwig},
}

@misc {Dri,
    title={On the Grinberg--Kazhdan formal arc theorem},
    author={Drinfeld, Vladimir},
    year={2002},
    eprint={math/0203263},
    archivePrefix={arXiv},
    primaryClass={math.AG}
}

@article{HW21, 
title={Arquile Varieties – Varieties Consisting of Power Series in a Single Variable}, 
volume={9}, 
DOI={10.1017/fms.2021.73}, 
journal={Forum Math. Sigma},
fjournal={Forum of Mathematics, Sigma}, 
publisher={Cambridge University Press},
author={Hauser, Herwig and Woblistin, Sebastian}, year={2021},}

@article {BS17,
    AUTHOR = {Bourqui, David and Sebag, Julien},
     TITLE = {The {D}rinfeld-{G}rinberg-{K}azhdan theorem is false for
              singular arcs},
   JOURNAL = {J. Inst. Math. Jussieu},
  FJOURNAL = {Journal of the Institute of Mathematics of Jussieu. JIMJ.
              Journal de l'Institut de Math\'{e}matiques de Jussieu},
    VOLUME = {16},
      YEAR = {2017},
    NUMBER = {4},
     PAGES = {879--885},
      ISSN = {1474-7480},
   MRCLASS = {14E18 (14B05)},
  MRNUMBER = {3680347},
MRREVIEWER = {Carles Bivi\`a-Ausina},
       DOI = {10.1017/S1474748015000341},
       URL = {https://doi.org/10.1017/S1474748015000341},
}

@article {BNS16,
    AUTHOR = {Bouthier, A. and Ngo, B. C. and Sakellaridis, Y.},
     TITLE = {On the formal arc space of a reductive monoid},
   JOURNAL = {Amer. J. Math.},
  FJOURNAL = {American Journal of Mathematics},
    VOLUME = {138},
      YEAR = {2016},
    NUMBER = {1},
     PAGES = {81--108},
      ISSN = {0002-9327},
   MRCLASS = {14E18 (11F70)},
  MRNUMBER = {3462881},
MRREVIEWER = {Julien Sebag},
       DOI = {10.1353/ajm.2016.0004},
       URL = {https://doi.org/10.1353/ajm.2016.0004},
}

@article {BS17b,
    AUTHOR = {Bourqui, David and Sebag, Julien},
     TITLE = {The {D}rinfeld-{G}rinberg-{K}azhdan theorem for formal schemes
              and singularity theory},
   JOURNAL = {Confluentes Math.},
  FJOURNAL = {Confluentes Mathematici},
    VOLUME = {9},
      YEAR = {2017},
    NUMBER = {1},
     PAGES = {29--64},
   MRCLASS = {14E18 (14B05)},
  MRNUMBER = {3713815},
MRREVIEWER = {Carles Bivi\`a-Ausina},
       DOI = {10.5802/cml.35},
       URL = {https://doi.org/10.5802/cml.35},
}

@inbook{Dri20,
author = {Drinfeld, Vladimir},
title = {The Grinberg–Kazhdan Formal Arc Theorem and the Newton Groupoids},
booktitle = {Arc Schemes and Singularities},
chapter = {3},
pages = {37--56},
doi = {10.1142/9781786347206_0003},
year = {2020},
URL = {https://www.worldscientific.com/doi/abs/10.1142/9781786347206_0003},
}

\end{document}